\documentclass[11pt]{amsart}

\usepackage[T1]{fontenc}
\usepackage[utf8]{inputenc}
\usepackage{lmodern}
\usepackage{amsmath,amssymb,mathtools,amscd}
\usepackage{enumitem}
\usepackage{microtype}
\usepackage{hyperref}
\hypersetup{
  hidelinks,
  unicode=true,
  pdftitle={Iterated Cartier Flux and Non-Finite Generation of Tame Polynomial Automorphism Groups over Finite Fields},
  pdfauthor={Stefan Barańczuk and Tomasz Ślusarski},
  pdfsubject={Tame polynomial automorphism groups over finite fields and iterated Cartier-flux characters},
  pdfkeywords={tame polynomial automorphism, finite field, finite generation, Cartier operator, algebraic de Rham cohomology, mod-p abelianization}
}
\usepackage{aliascnt}
\usepackage[nameinlink,noabbrev]{cleveref}

\newtheorem{theorem}{Theorem}[section]

\newaliascnt{proposition}{theorem}
\newtheorem{proposition}[proposition]{Proposition}
\aliascntresetthe{proposition}

\newaliascnt{lemma}{theorem}
\newtheorem{lemma}[lemma]{Lemma}
\aliascntresetthe{lemma}

\newaliascnt{corollary}{theorem}
\newtheorem{corollary}[corollary]{Corollary}
\aliascntresetthe{corollary}

\newaliascnt{conjecture}{theorem}
\newtheorem{conjecture}[conjecture]{Conjecture}
\aliascntresetthe{conjecture}

\theoremstyle{definition}
\newaliascnt{definition}{theorem}
\newtheorem{definition}[definition]{Definition}
\aliascntresetthe{definition}

\newaliascnt{example}{theorem}
\newtheorem{example}[example]{Example}
\aliascntresetthe{example}

\newaliascnt{problem}{theorem}
\newtheorem{problem}[problem]{Open Problem}
\aliascntresetthe{problem}

\theoremstyle{remark}
\newaliascnt{remark}{theorem}
\newtheorem{remark}[remark]{Remark}
\aliascntresetthe{remark}

\crefname{theorem}{Theorem}{Theorems}
\Crefname{theorem}{Theorem}{Theorems}
\crefname{proposition}{Proposition}{Propositions}
\Crefname{proposition}{Proposition}{Propositions}
\crefname{lemma}{Lemma}{Lemmas}
\Crefname{lemma}{Lemma}{Lemmas}
\crefname{corollary}{Corollary}{Corollaries}
\Crefname{corollary}{Corollary}{Corollaries}
\crefname{conjecture}{Conjecture}{Conjectures}
\Crefname{conjecture}{Conjecture}{Conjectures}
\crefname{definition}{Definition}{Definitions}
\Crefname{definition}{Definition}{Definitions}
\crefname{example}{Example}{Examples}
\Crefname{example}{Example}{Examples}
\crefname{problem}{Open Problem}{Open Problems}
\Crefname{problem}{Open Problem}{Open Problems}
\crefname{remark}{Remark}{Remarks}
\Crefname{remark}{Remark}{Remarks}

\DeclareMathOperator{\Aut}{Aut}
\DeclareMathOperator{\Spec}{Spec}

\DeclareMathOperator{\im}{im}
\DeclareMathOperator{\spanop}{span}
\DeclareMathOperator{\pr}{pr}

\newcommand{\Fp}{\mathbf F_p}
\newcommand{\Fq}{\mathbf F_q}
\newcommand{\A}{\mathbf A}
\newcommand{\N}{\mathbf N}
\newcommand{\id}{\mathrm{id}}

\newcommand{\DAMW}{\operatorname{DA}^{\mathrm{MW}}}
\newcommand{\jet}{\operatorname{jet}}

\title[Iterated Cartier Flux]{Iterated Cartier Flux and Non-Finite Generation of Tame Polynomial Automorphism Groups over Finite Fields}

\author{Stefan Barańczuk}
\thanks{Corresponding author.}
\address{Faculty of Mathematics and Computer Science, Adam Mickiewicz University, Poznań, Poland}
\email{stefbar@amu.edu.pl}
\urladdr{https://orcid.org/0000-0003-4198-9241}

\author{Tomasz Ślusarski}
\address{}
\email{tomasz.slusarski@amu.edu.pl}

\subjclass[2020]{14R10, 20F05, 14F40, 13N05}
\keywords{tame polynomial automorphism, finite field, finite generation, Cartier operator, algebraic de Rham cohomology, mod-$p$ abelianization}
\date{August 24, 2026}

\begin{document}

\begin{abstract}
Let $k=\mathbf F_q$ be a finite field of characteristic $p$, and let
\[
U_n(k)=\{F\in\operatorname{TA}_n(k):F(0)=0,\ JF(0)=I_n\}.
\]
We prove that every special tame polynomial automorphism is repeatedly
Cartier-admissible for the de Rham flux class associated with
\[
\lambda=x_1\,dx_2\wedge\cdots\wedge dx_n.
\]
Lowest-weight projection of the iterated Cartier descents gives additive
characters
\[
\chi_{r,j}:U_n(k)\longrightarrow(k,+),
\qquad r\ge0,\quad 1\le j\le n,
\]
and for every fixed group element all but finitely many of these characters
vanish. Put
\[
S_{n,p}=\begin{cases}
\mathbf N_{\ge1},&(n,p)=(2,2),\\
\mathbf N_0,&\text{otherwise}.
\end{cases}
\]
The joint character map is surjective onto
\[
\bigoplus_{s\in S_{n,p}}(k^n,+),
\]
and every fixed-coordinate submap onto
\[
\bigoplus_{s\in S_{n,p}}(k,+)
\]
admits an explicit group-theoretic section. Consequently,
\[
\dim_{\mathbf F_p}
\frac{U_n(k)}{[U_n(k),U_n(k)]\,U_n(k)^{[p]}}
=\aleph_0.
\]
The character $\chi_{r,j}$ factors through the jet of order
\[
N_r=(n-1)(p^{r+1}-1),
\]
and this order is optimal for every $r\in S_{n,p}$. Since $U_n(k)$ has
finite index in $\operatorname{TA}_n(k)$, the tame polynomial automorphism
group over a finite field is finitely generated exactly in dimension one.
In particular, this proves the Maubach--Willems finite-generation conjecture
for $n\ge3$.
\end{abstract}

\maketitle

\section{Introduction}

Let $k$ be a field. The tame polynomial automorphism group
$\operatorname{TA}_n(k)$ is generated by the affine automorphisms of
$\A_k^n$ and the elementary automorphisms
\[
E_i(f)=(x_1,\ldots,x_{i-1},x_i+f,x_{i+1},\ldots,x_n),
\qquad
f\in k[x_1,\ldots,\widehat{x_i},\ldots,x_n].
\]
In characteristic zero and dimension at least three, Derksen proved that the
affine subgroup together with one suitable nonlinear elementary
automorphism generates the entire tame group; see
\cite[Section~5.2]{vanDenEssen2000}. Bodnarchuk subsequently proved that an
arbitrary nonlinear triangular automorphism may be used in place of
Derksen's original generator \cite{Bodnarchuk2005}.

The finite-field situation is different. Let $k=\Fq$, where $q=p^a$. In a
preprint first posted in 2009 and published in 2011, Maubach and Willems
proved that, for $n\ge3$, the group $\operatorname{TA}_n(k)$ is generated by
the affine subgroup together with the infinite family
\begin{equation}
\left(x_1+x_2^{k_2p-1}\cdots x_n^{k_np-1},x_2,\ldots,x_n\right),
\qquad
1\le k_2\le\cdots\le k_n,
\end{equation}
and formulated the following conjecture
\cite[Conjecture~2.1 and Theorem~2.3]{MaubachWillems2011}.

\begin{conjecture}[Maubach--Willems]
\label{conj:MW}
Let $k$ be a finite field and let $n\ge3$. There is no finite subset
$\mathcal E\subseteq\operatorname{TA}_n(k)$ such that
\[
\operatorname{TA}_n(k)=\langle\operatorname{Aff}_n(k),\mathcal E\rangle.
\]
\end{conjecture}

Since $\operatorname{Aff}_n(k)$ is finite, \cref{conj:MW} is equivalent to
non-finite generation of $\operatorname{TA}_n(k)$. Kuroda later recorded the
same finite-generation problem in work on stable co-tameness
\cite[p.~1032]{Kuroda2017}.

Kuroda's bibliography also lists Motoki Kuroda's 2016 Japanese master's
thesis \emph{Stably Derksen polynomial automorphisms over finite fields}.
The thesis was not obtainable for inspection. We therefore use only the
published account in \cite[Section~6.3]{Kuroda2017}: it reports, for
$n\ge3$, that $E_1(x_2^2)$ is stably co-tame over $\mathbf F_p$ for $p\ge3$,
that stable co-tameness of $E_1(x_2^5)$ over $\mathbf F_3$ was difficult to
decide, and that $E_1(x_2^5)$ is stably co-tame over $\mathbf F_9$. These
statements concern stable co-tameness of individual automorphisms; they
neither assert non-finite generation of the full tame group nor construct
the characters below. To the best of our knowledge, \cref{cor:intro-classification}
gives the first proof of \cref{conj:MW}.

Maubach and Willems also introduced the modified finite-field Derksen
subgroup
\begin{equation}
\DAMW_n(k)=
\left\langle
\operatorname{Aff}_n(k),
\left(x_1+x_2^{p-1}\cdots x_n^{p-1},x_2,\ldots,x_n\right)
\right\rangle
\qquad(n\ge3).
\end{equation}
They proved that, for every $m\ge1$, its permutation image on
$\mathbf F_{q^m}^n$ is the same as the corresponding image of the full tame
group \cite[Theorem~3.1]{MaubachWillems2011}. Thus finite-extension
permutation images do not distinguish the two groups. This is related to
the profinite viewpoint developed by Maubach and Rauf
\cite{MaubachRauf2015}.

This modified subgroup must be distinguished from the original Derksen
group
\[
\operatorname{DA}_n(k)=\langle\operatorname{Aff}_n(k),E_1(x_2^2)\rangle.
\]
In characteristic two, Hakuta proved
$\operatorname{DA}_n(\mathbf F_2)\subsetneq\operatorname{TA}_n(\mathbf F_2)$
for every $n\ge3$ \cite[Theorem~1]{Hakuta2019}; he subsequently proved, over
finite fields of characteristic two under the hypotheses of that paper,
that the permutation group induced by the original Derksen group is a
proper subgroup of the corresponding alternating group \cite{Hakuta2021}.
These results concern a different one-generator subgroup and
permutation-theoretic obstructions. They do not imply the countable
quotient or the finite-generation theorem proved here.

Our first result gives substantially more than non-finite generation. Put
\[
U_n(k)=\{F\in\operatorname{TA}_n(k):F(0)=0,\ JF(0)=I_n\}.
\]
For a group $G$, let
\[
G^{[p]}=\langle g^p:g\in G\rangle.
\]
Then
\[
G/[G,G]G^{[p]}
\]
is the universal elementary abelian $p$-quotient of $G$, also called its
mod-$p$ abelianization. Direct sums below consist of finitely supported
families. Define
\begin{equation}
S_{n,p}=
\begin{cases}
\N_{\ge1},&(n,p)=(2,2),\\
\N_0,&\text{otherwise}.
\end{cases}
\end{equation}

\begin{theorem}[Cartier character quotients]
\label{thm:main}
Let $k=\Fq$ have characteristic $p$, and let $n\ge2$. There are group
homomorphisms
\[
\chi_{r,j}:U_n(k)\longrightarrow(k,+),
\qquad r\ge0,\quad 1\le j\le n,
\]
with the following properties.
\begin{enumerate}[label=\textup{(\roman*)}]
\item For each $F\in U_n(k)$, all but finitely many $\chi_{r,j}(F)$ vanish.

\item The joint homomorphism
\[
\chi_{\mathrm{all}}:U_n(k)\longrightarrow
\bigoplus_{s\in S_{n,p}}(k^n,+)
\]
is surjective.

\item For each fixed $1\le j\le n$, the homomorphism
\[
\chi_{S,j}:U_n(k)\longrightarrow
\bigoplus_{s\in S_{n,p}}(k,+),
\qquad
F\longmapsto(\chi_{s,j}(F))_{s\in S_{n,p}},
\]
is a split epimorphism.

\item The mod-$p$ abelianization
\[
Q_n(k)=
\frac{U_n(k)}{[U_n(k),U_n(k)]\,U_n(k)^{[p]}}
\]
satisfies
\[
\dim_{\Fp}Q_n(k)=\aleph_0.
\]
\end{enumerate}
\end{theorem}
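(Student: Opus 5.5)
The characters come from the de Rham flux of $\lambda=x_1\,dx_2\wedge\cdots\wedge dx_n$. For $F\in U_n(k)$ with $\det JF=1$, the form $F^*\lambda-\lambda$ is closed. Its de Rham class on $\A^n_k$ in degree $n-1$ vanishes, but in characteristic $p$ one can apply the inverse Cartier operator. The plan is to define $\Phi_0(F)=F^*\lambda-\lambda$ and then proceed inductively. Suppose $\Phi_r(F)$ is closed. By the Cartier isomorphism its class in $H^{n-1}_{\mathrm{dR}}(\A^n_k)$ is represented by $C^{-1}$ of an $(n-1)$-form. Applying the Cartier operator $C$ gives a new $(n-1)$-form, well defined modulo exact forms; call it $\Phi_{r+1}(F)$.

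The first step is to show that $\Phi_{r+1}(F)$ is again closed, modulo exact forms, which is the repeated admissibility. It suffices to check this on generators: affine maps in $U_n$ are linear unipotent, and the rest are elementary maps $E_i(f)$. For $E_i(f)$ the pullback difference is an explicit form built from $f$ and $df$. One then verifies a cocycle identity,
\[
\Phi_r(F\circ G)=G^*\Phi_r(F)+\Phi_r(G)
\]
modulo exact forms, using that $C$ commutes with pullback.

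Next define $\chi_{r,j}(F)$ as the coefficient, in lowest weight, of the $p^{r}$-power-scaled monomial form in the $j$-th slot. Concretely, $C^r$ of the form has a lowest-degree homogeneous component. Its class in the graded quotient, modulo exact forms, is spanned by $n$ basis forms, and the coordinate $j$ picks one of them. Because $F$ is tangent to the identity, the pullback $G^*$ acts trivially on lowest weight. The cocycle identity therefore makes each $\chi_{r,j}$ additive.

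Part (i) then follows from the degree bound. $C^r$ divides degrees by roughly $p^r$, so once $p^{r+1}$ exceeds the degree of $F$ the lowest-weight term lands in degree too low to survive. This is also where the jet order $N_r=(n-1)(p^{r+1}-1)$ appears.

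For (ii) and (iii) I would construct explicit elements. For each $s\in S_{n,p}$, each $j$ and each $c\in k$, take an elementary automorphism, conjugated by a permutation or linear map so that it moves the coordinate $j$. A candidate is
\[
E_i\Bigl(c\,\prod_{l\ne i}x_l^{p^{s+1}-1}\Bigr),
\]
adjusted as needed. The goal is to compute that its characters are $c$ at $(s,j)$ and vanish at $(s',j)$ for $s'<s$. The map $c\mapsto$ this element is a homomorphism $(k,+)\to U_n(k)$, since the elementary maps $E_i(cg)$ commute for fixed $i$ and $g$. This gives a section up to a unitriangular correction, and a triangular inversion produces an exact splitting. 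The joint surjectivity in (ii) follows by combining these sections over all $j$.

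The case $(n,p)=(2,2)$ needs separate treatment. There the $s=0$ candidate, for example a quadratic-type term, has exact flux, so $s=0$ is excluded. Optimality of the jet order is not required by this theorem.

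For (iv), each $\chi_{r,j}$ takes values in an elementary abelian $p$-group, so it factors through $Q_n(k)$. By (ii), $Q_n(k)$ therefore surjects onto an infinite-dimensional $\Fp$-space, which gives $\dim\ge\aleph_0$. The reverse bound holds because $U_n(k)$ is countable.

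The main obstacle is the well-definedness and lowest-weight computation of the iterated Cartier descent. One must show that each $\Phi_r(F)$ stays in the image of the Cartier-admissible classes, and that the lowest-weight projection ignores the exact-form ambiguity. I would first prove this for elementary generators by explicit monomial calculation, then propagate it through the cocycle identity.
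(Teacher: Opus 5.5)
Your architecture matches the paper's: flux of $\lambda$, iterated Cartier descent, projection to the lowest weight $W_0$, elementary test shears, and countability. However, two steps fail as written.

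First, the admissibility argument is set in the wrong group. You define the flux only on $U_n(k)$ and reduce to ``generators'', taking the affine ones to be linear unipotent maps in $U_n(k)$. But the only affine map in $U_n(k)$ is the identity. You also have no generating set for $U_n(k)$, since the factors of a tame word representing $F\in U_n(k)$ generally lie outside $U_n(k)$. The flux, its Cartier iterates, the crossed cocycle identity and closure of admissibility under products must be set up on $\operatorname{STA}_n(k)$, where $\det JF=1$ already makes $F^*\lambda-\lambda$ closed. You also need that $\operatorname{STA}_n(k)$ is generated by $\operatorname{SAff}_n(k)$ and the elementary maps; this takes an argument, because $\operatorname{TA}_n(k)$ is defined using affine maps of arbitrary determinant.

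Admissibility must then be checked for every elementary $E_j(f)$, including constant and linear $f$. Here $\Phi_0(E_j(f))=[f\omega_j]$ and $C([f\omega_j])=\mathcal R_j(f)\omega_j$, which is closed because $\mathcal R_j(f)$ does not involve $x_j$, and whose degree strictly drops. It must also be checked for special affine maps. Their flux vanishes by weight except when $(n,p)=(2,2)$; there, for example, $\Phi_0(E_1(x_2))=[x_2\,dx_2]\neq0$ lies in $W_0$ and has exact Cartier image $dx_2$. Relatedly, your opening claim that the class of $F^*\lambda-\lambda$ vanishes is false in characteristic $p$: $H^{n-1}_{\mathrm{dR}}(R)=\bigoplus_j R^p[\rho_j]$. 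If the flux class vanished, every character would be zero.

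Second, the splitting step is not justified. You only plan to show that the level-$s$ element has $\chi_{s',j}=0$ for $s'<s$, and then invert a unitriangular matrix. In an infinite direct sum, a column-finite unitriangular matrix whose off-diagonal entries sit at \emph{higher} levels need not be surjective. For example, the vectors $e_s-e_{s+1}$ span only the kernel of the coordinate-sum map, and the successive corrections never terminate. What you need, and what is true, is vanishing at the levels $r>s$: after $s$ descents the class is a multiple of $[\rho_j]$, and $C([\rho_j])=\omega_j$ is exact. Take the coefficient $a^{p^s}$ to compensate for $C(c\alpha)=c^{1/p}C(\alpha)$. You then get the exact delta $\chi_{r,i}(v_{s,j}(a))=\delta_{rs}\delta_{ij}a$, and no correction is needed. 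The vanishing for $i\neq j$ holds because all iterates stay in the $[\rho_j]$-summand, and it is what lets you combine over $j$ in (ii).

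Finally, your reason for excluding $s=0$ when $(n,p)=(2,2)$ is wrong. The candidate $E_1(cx_2)$ is linear, hence not in $U_2(k)$, and its flux $[cx_2\,dx_2]$ is not exact. The actual reason is that $\chi_{0,j}$ factors through the $1$-jet and so vanishes on $U_2(k)$; the theorem's target omits $s=0$ in any case.
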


Each $\chi_{r,j}$ already factors through a finite jet: the required order
is
\[
N_r=(n-1)(p^{r+1}-1),
\]
and this order is optimal for every $r\in S_{n,p}$; see
\cref{prop:finite-jet,cor:optimal-jet}.

\begin{corollary}[Finite-generation classification]
\label{cor:intro-classification}
For every finite field $k$ and every $n\ge1$,
\[
\operatorname{TA}_n(k)\text{ is finitely generated}
\quad\Longleftrightarrow\quad
n=1.
\]
In particular, \cref{conj:MW} holds.
\end{corollary}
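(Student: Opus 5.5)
The plan is to deduce the corollary from \cref{thm:main}(iv) together with a finite-index argument, and to treat $n=1$ separately.

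For $n=1$, the group $\operatorname{TA}_1(k)$ is generated by the affine maps $x\mapsto ax+b$ and the elementary maps $E_1(f)$ with $f\in k$, which are again affine. Hence $\operatorname{TA}_1(k)=\operatorname{Aff}_1(k)$. This group is finite of order $q(q-1)$, so it is finitely generated.

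Now let $n\ge2$. The first step is to show that $U_n(k)$ has finite index in $\operatorname{TA}_n(k)$. Consider the map
\[
F\mapsto (F(0),JF(0))\in k^n\times \mathrm{GL}_n(k).
\]
This is not a homomorphism. However, every $F\in\operatorname{TA}_n(k)$ can be written as $F=A\circ G$, where $A$ is the affine map $x\mapsto JF(0)\,x+F(0)$ and $G=A^{-1}\circ F$. By the chain rule, $G(0)=0$ and $JG(0)=I_n$. Also $G$ is tame, being a composite of tame maps, so $G\in U_n(k)$. Therefore
\[
\operatorname{TA}_n(k)=\operatorname{Aff}_n(k)\,U_n(k),
\]
and the index $[\operatorname{TA}_n(k):U_n(k)]$ is at most $|\operatorname{Aff}_n(k)|<\infty$. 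The subgroup property of $U_n(k)$ is immediate: the fixed-point condition $F(0)=0$ is preserved under composition, and the chain rule gives $J(F\circ G)(0)=JF(0)\,JG(0)=I_n$.

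The second step uses the standard fact (Schreier's lemma) that a finite-index subgroup of a finitely generated group is finitely generated. So if $\operatorname{TA}_n(k)$ were finitely generated, $U_n(k)$ would be generated by some finite set $g_1,\dots,g_m$. Its mod-$p$ abelianization $Q_n(k)$ would then be an $\Fp$-vector space spanned by the images of the $g_i$, and so $\dim_{\Fp}Q_n(k)\le m$. This contradicts \cref{thm:main}(iv), which gives $\dim_{\Fp}Q_n(k)=\aleph_0$. Hence $\operatorname{TA}_n(k)$ is not finitely generated for any $n\ge2$.

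For the Maubach--Willems statement, take $n\ge3$ and suppose $\operatorname{TA}_n(k)=\langle\operatorname{Aff}_n(k),\mathcal E\rangle$ with $\mathcal E$ finite. Since $\operatorname{Aff}_n(k)$ is finite, $\operatorname{TA}_n(k)$ would be finitely generated, which we have just excluded. Thus \cref{conj:MW} holds.

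There is no genuine obstacle here beyond \cref{thm:main}(iv). The only points needing care are that the decomposition $F=A\circ G$ stays inside the tame group, which holds because affine maps are tame, and the appeal to Schreier's lemma.
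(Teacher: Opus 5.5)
Your proposal is correct and is essentially the paper's proof: $\operatorname{TA}_1(k)=\operatorname{Aff}_1(k)$ is finite, and for $n\ge2$ the finite index of $U_n(k)$ together with Schreier's lemma would force $\dim_{\Fp}Q_n(k)<\infty$, contradicting part~(iv) of \cref{thm:main}. The differences are cosmetic. You get $\operatorname{TA}_1(k)=\operatorname{Aff}_1(k)$ because one-variable elementary maps are translations, and the index bound from the factorization $\operatorname{TA}_n(k)=\operatorname{Aff}_n(k)\,U_n(k)$. The paper instead uses a degree argument and the orbit--stabilizer count plus the Jacobian-at-the-origin homomorphism.
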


The Cartier construction applies uniformly in dimension two and yields the
explicit quotient and countably infinite-dimensional mod-$p$
abelianization stated in \cref{thm:main}; no separate plane-group
decomposition is used.

The proof has the following architecture:
\[
\begin{aligned}
\text{flux}
&\longrightarrow
\text{Cartier-admissibility}
\longrightarrow
\text{lowest-weight characters}\\
&\longrightarrow
\text{delta evaluation}
\longrightarrow
\text{quotient}
\longrightarrow
\text{non-finite generation}.
\end{aligned}
\]
The polynomial de Rham calculation, relative Cartier isomorphism, and
primitive-form crossed cocycle are foundational inputs. The same formal
Cartier--flux construction also occurs in recent work of Ma on finite
congruence quotients of the ambient Jacobian-one formal automorphism group.
The new contribution here begins with repeated Cartier descent on the
special tame polynomial subgroup. We prove that every special tame element
is Cartier-admissible, extract ordinary characters at arbitrarily deep
positive iterated levels, and construct a Frobenius-normalized family of
elementary shears with cross-level Kronecker-delta values. The joint
character map is surjective onto a countable direct sum, every
fixed-coordinate submap is split, and the resulting maximal elementary
abelian $p$-quotient has countably infinite dimension. This yields the
finite-generation classification of tame polynomial automorphism groups
over finite fields.

The roles of the standing hypotheses are distinct. Perfectness of the
finite field supplies the inverse Frobenius on coefficients and the
displayed polynomial de Rham decomposition. Tameness is used to prove
Cartier-admissibility from special affine and elementary generators.
Finiteness is used for the finite-index reduction, the finiteness of
truncated jet images, countability, and the final finite-generation
classification.

During preparation of this manuscript, we became aware of Ma's
July--August 2026 preprints on finite congruence quotients $T_M$ of the full
tangent-to-the-identity Jacobian-one formal automorphism group over
$\mathbf F_p$. In the common range $n\ge3$ and $p\ge5$, Ma uses the same
primitive
\[
\lambda=x_1\,dx_2\wedge\cdots\wedge dx_n
\]
and the same crossed flux class $[F^*\lambda-\lambda]$. His Cartier--Frattini coordinate theorem identifies the Frattini subgroup
with the common kernel of the quadratic coordinate and the retained constant,
trace, and divergence coordinates $Q_0,Q_1,Q_2$; equivalently, no visible
Cartier layer of index at least three contributes to the Frattini quotient
\cite[Theorem~5.2 and Corollary~5.5]{MaCartierFrattini2026}. Under the
standard Cartier--contraction identification, the restriction of $Q_0$ to
$U_n(\mathbf F_p)$ agrees with our zeroth vector character
$\chi_0=(\chi_{0,1},\ldots,\chi_{0,n})$.

More precisely, if
\[
d_t=(p-1)(n-1)+pt,
\qquad
q_r=(n-1)(p^r-1),
\]
then our optimal jet degree satisfies $N_r=d_{q_r}$. Moreover, over
$\mathbf F_p$, the test shear $v_{r,j}(a)$ is the one-coordinate
inverse-Cartier monomial representative obtained by taking
$\alpha_j=0$ and $\alpha_i=p^r-1$ for $i\ne j$ in
\cite[Lemma~3.6]{MaCartierFrattini2026}. The conclusions diverge at
positive levels: for $r\ge1$, the corresponding ambient Cartier component
is killed in the Frattini quotient of $T_M$, whereas its restriction to the
tame polynomial subgroup supports the nonzero character $\chi_{r,j}$.
Thus these positive-depth characters do not extend to the corresponding
ambient finite formal congruence group; this is made precise in
\cref{prop:Ma-comparison}.

Ma's relation-symbol paper constructs a canonical surjection
$\bar\pi:R_3\twoheadrightarrow K_3$ with an explicit Euler lift, shows that
the homology of a divergence-free coefficient subcomplex maps onto the
remaining kernel, and proves survival of the canonical $K_3$ cross-term
under the map from $H_2(T_M,\mathbf F_p)$ to
$H_2(T_M/\Phi(T_M),\mathbf F_p)$
\cite[Lemma~2.5, Section~3.3, and Theorem~4.10]{MaRelationSymbols2026}.
The subsequent Frattini--commutator paper determines the first
Frattini--commutator layer, identifies it with $P_3/P_4$, and proves a lower
$p$-central Cartier support cone
\cite[Theorem~3.18, Proposition~3.25, and Proposition~4.7]{MaFrattiniCommutator2026}. Those homological and commutator-layer
theorems are not addressed here, and neither they nor the ambient
Frattini calculation imply the positive-depth tame-subgroup quotient or
the finite-generation theorem. No theorem from these preprints is used as
an input to the proofs of our principal character, quotient, or
finite-generation results. The comparison proposition below uses Ma's
Frattini theorem only to identify the ambient non-extension phenomenon.
The foundational Cartier--flux machinery is shared, but the decisive
iterative tame-subgroup argument and the proof-ending mechanism are
different.

The paper is organized as follows. \Cref{sec:groups} fixes the group
convention and establishes the finite-index reduction. \Cref{sec:cartier}
computes polynomial de Rham cohomology and records relative Cartier
naturality. \Cref{sec:flux} develops the flux cocycle and proves the
structural tame Cartier-admissibility theorem. \Cref{sec:characters}
constructs the iterated lowest-weight characters and proves finite-jet
dependence. \Cref{sec:quotient} evaluates them on explicit elementary
shears, proves the delta and quotient theorems, and gives the precise
comparison with Ma's ambient formal coordinates before proving
\cref{thm:main,cor:intro-classification}. \Cref{sec:open} records the
limitations of the construction and several open problems.

\section{Tame automorphism groups and the tangent subgroup}
\label{sec:groups}

\subsection{Conventions and special tame generators}

We use the geometric convention
\[
\operatorname{GA}_n(k)=\operatorname{Aut}_k(\A_k^n).
\]
An element is represented by a coordinate tuple $F=(F_1,\ldots,F_n)$, and multiplication is geometric composition,
\[
FG=F\circ G.
\]
The induced pullback on the coordinate ring $R=k[x_1,\ldots,x_n]$ is determined by $F^*(x_i)=F_i$ and satisfies
\[
(FG)^*=G^*F^*.
\]
Thus the coordinate-ring automorphism group is the opposite group to the geometric convention used here.

The affine subgroup is denoted by $\operatorname{Aff}_n(k)$. The tame group $\operatorname{TA}_n(k)$ is generated by $\operatorname{Aff}_n(k)$ and all elementary automorphisms. Let $\operatorname{SAff}_n(k)$ be the affine automorphisms whose linear part has determinant one, and put
\[
\operatorname{STA}_n(k)
=
\{F\in\operatorname{TA}_n(k):\det JF=1\}.
\]

\begin{lemma}[Constant Jacobian determinant]
Let $k$ be a field and let $F\in\operatorname{GA}_n(k)$. Then
\[
\det JF\in k^\times.
\]
If $JF(0)=I_n$, then $\det JF=1$.
\end{lemma}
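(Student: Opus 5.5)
The plan is to use the chain rule for Jacobian matrices together with the fact that the units of a polynomial ring over a field are exactly the nonzero constants. Let $G\in\operatorname{GA}_n(k)$ be the inverse of $F$, so that $F\circ G=G\circ F=\id$. Differentiating the identity $G\circ F=\id$ coordinatewise gives the matrix identity
\[
\bigl((JG)\circ F\bigr)\cdot JF=I_n
\]
in $M_n(R)$, where $R=k[x_1,\ldots,x_n]$ and $(JG)\circ F$ is obtained by substituting $F_1,\ldots,F_n$ into the entries of $JG$. This is the formal chain rule for polynomial maps. It holds over any field, because formal partial derivatives satisfy the Leibniz and composition rules purely algebraically, and the formal chain rule for polynomials can be checked on monomials.

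Taking determinants yields
\[
\det\bigl((JG)\circ F\bigr)\cdot\det JF=1 \quad\text{in } R.
\]
Hence $\det JF$ is a unit of $R$. Since $R$ is an integral domain, degrees add under multiplication, so $R^\times=k^\times$, and therefore $\det JF\in k^\times$.

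For the second assertion, $\det JF$ is a constant polynomial $c$, so evaluating at the origin gives
\[
c=(\det JF)(0)=\det\bigl(JF(0)\bigr)=\det I_n=1.
\]
Here evaluation at $0$ commutes with taking determinants because evaluation $R\to k$ is a ring homomorphism.

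There is no substantial obstacle. The only points needing care are the validity of the formal chain rule in arbitrary characteristic and the observation that the orientation of composition does not matter: either $F\circ G$ or $G\circ F$ gives the same conclusion. Under the paper's geometric convention, $FG=F\circ G$, and one may equally use $(JF\circ G)\cdot JG=I_n$ and then substitute $F$.
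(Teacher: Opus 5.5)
Your proposal is correct and follows essentially the same route as the paper. You apply the chain rule to $G\circ F=\id$ to get $JG(F(x))\,JF(x)=I_n$, take determinants to see that $\det JF$ is a unit of $k[x_1,\ldots,x_n]$ and hence a nonzero constant, and evaluate at the origin; your remarks on the formal chain rule in arbitrary characteristic and on $R^\times=k^\times$ only make explicit what the paper leaves implicit.
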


\begin{proof}
Let $G=F^{-1}$. The chain rule gives
\[
JG(F(x))JF(x)=I_n.
\]
Taking determinants yields
\[
\det JG(F(x))\det JF(x)=1
\]
in $k[x_1,\ldots,x_n]$. Hence $\det JF$ is a unit of the polynomial ring and therefore belongs to $k^\times$. If $JF(0)=I_n$, evaluation at the origin gives $\det JF=1$.
\end{proof}

\begin{proposition}[Generation of the special tame group]
\label{prop:special-generation}
For every field $k$, the group $\operatorname{STA}_n(k)$ is generated by $\operatorname{SAff}_n(k)$ and the elementary automorphisms.
\end{proposition}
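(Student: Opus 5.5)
We prove the two inclusions separately. Let $H$ denote the subgroup generated by $\operatorname{SAff}_n(k)$ and all elementary automorphisms. The inclusion $H\subseteq\operatorname{STA}_n(k)$ is immediate. An elementary automorphism $E_i(f)$ has unipotent triangular-type Jacobian, since $f$ does not involve $x_i$, so $\det JE_i(f)=1$. Elements of $\operatorname{SAff}_n(k)$ have determinant one by definition. Both kinds of generator are tame. The chain rule gives $\det J(FG)=(\det JF\circ G)\cdot\det JG$, so elements with constant Jacobian determinant one form a subgroup of $\operatorname{TA}_n(k)$. This subgroup is $\operatorname{STA}_n(k)$.

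For the reverse inclusion, take $F\in\operatorname{STA}_n(k)$ and write it as a word $F=g_1g_2\cdots g_m$ with each $g_\ell$ affine or elementary. The plan is to push all determinant factors to one end. Let $D_c=(cx_1,x_2,\ldots,x_n)$ for $c\in k^\times$. Every affine automorphism $A$ can be written as $A=A'D_c$ with $A'\in\operatorname{SAff}_n(k)$ and $c=\det$ of the linear part of $A$. Every elementary automorphism is already in $H$. It therefore suffices to show that $H$ is normalized by each $D_c$. Then we can move all the $D_c$ factors to the right and obtain $F=hD_{c}$ with $h\in H$ and $c$ the product of the individual scalars.

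For the normalization, we compute the conjugates directly:
\[
D_c\,E_i(f)\,D_c^{-1}=E_i(\tilde f),
\]
where $\tilde f$ is obtained from $f$ by the substitution $x_1\mapsto c^{-1}x_1$ and, when $i=1$, also multiplied by $c$. In either case $\tilde f$ still omits $x_i$, so the conjugate is again elementary. Likewise $D_c\operatorname{SAff}_n(k)D_c^{-1}=\operatorname{SAff}_n(k)$, because conjugation preserves the determinant of the linear part. Hence $D_cHD_c^{-1}=H$, and $F=hD_c$ as claimed.

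Finally, taking Jacobian determinants in $F=hD_c$ gives $1=\det JF=1\cdot c$, so $c=1$ and $F=h\in H$. No step here is a serious obstacle. The only point needing care is the bookkeeping of the conjugation $D_cE_i(f)D_c^{-1}$ under the geometric composition convention. The index $i=1$ behaves differently from $i\neq1$, but in both cases the result is elementary.
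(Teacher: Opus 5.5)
Your proof is correct and follows the paper's argument: the same maps $D_c=(cx_1,x_2,\ldots,x_n)$, the same conjugation formulas showing that $D_c$ normalizes $H$, and the same determinant comparison forcing $c=1$. The only difference is cosmetic: you factor $A=A'D_c$ and collect the $D_c$ on the right, whereas the paper writes $A=D_cB$ and concludes $\operatorname{TA}_n(k)=DH$.
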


\begin{proof}
Let $H$ be the subgroup generated by $\operatorname{SAff}_n(k)$ and the elementary automorphisms. Every generator has Jacobian determinant one, so
\[
H\subseteq\operatorname{STA}_n(k).
\]

For $c\in k^\times$, put
\[
D_c=(cx_1,x_2,\ldots,x_n),
\qquad
D=\{D_c:c\in k^\times\}.
\]
Conjugation by $D_c$ preserves $\operatorname{SAff}_n(k)$. It also preserves elementary automorphisms. Indeed,
\[
D_cE_1(f)D_c^{-1}=E_1(cf),
\]
whereas, for $i\ne1$,
\[
D_cE_i(f)D_c^{-1}
=
E_i\bigl(f(c^{-1}x_1,x_2,\ldots,x_n)\bigr).
\]
The polynomial on the right remains independent of $x_i$. Thus $D$ normalizes $H$, and $DH$ is a subgroup.

Let $A$ be affine, and let $c$ be the determinant of its linear part. Then
\[
A=D_cB,
\qquad
B=D_c^{-1}A\in\operatorname{SAff}_n(k)\subseteq H.
\]
Every affine generator belongs to $DH$, and every elementary generator belongs to $H$. Hence
\[
\operatorname{TA}_n(k)=DH.
\]

If $F\in\operatorname{STA}_n(k)$, write $F=D_ch$ with $h\in H$. Then
\[
1=\det JF=c,
\]
so $F=h\in H$. Therefore $\operatorname{STA}_n(k)=H$.
\end{proof}

The preceding proposition concerns $\operatorname{STA}_n(k)$. No generation
statement for the smaller tangent subgroup $U_n(k)$ is used in the sequel.

\subsection{Finite index and boundary cases}

\begin{proposition}[Finite index of the tangent subgroup]
\label{prop:finite-index}
Let $k=\Fq$. Then $U_n(k)$ has finite index in $\operatorname{TA}_n(k)$, and
\[
[\operatorname{TA}_n(k):U_n(k)]
\le q^n|\operatorname{GL}_n(k)|.
\]
\end{proposition}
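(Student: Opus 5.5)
We construct a homomorphism from $\operatorname{TA}_n(k)$ into a finite group whose kernel is exactly $U_n(k)$. Then the index equals the size of the image, and it suffices to bound that image by $q^n|\operatorname{GL}_n(k)|$.

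The map is the one-jet map
\[
\phi:\operatorname{TA}_n(k)\longrightarrow\operatorname{Aff}_n(k),
\qquad
F\longmapsto\bigl(x\mapsto F(0)+JF(0)\,x\bigr),
\]
the affine part of $F$ at the origin.

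\emph{Step 1: $\phi$ is well defined.} Each $F\in\operatorname{GA}_n(k)$ has $F(0)\in k^n$. By the constant Jacobian determinant lemma, $\det JF(0)\in k^\times$, so $JF(0)\in\operatorname{GL}_n(k)$ and $\phi(F)$ is an affine automorphism.

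\emph{Step 2: $\phi$ is not a homomorphism, so we factor through a permutation action.} The chain rule gives $J(F\circ G)(0)=JF(G(0))\,JG(0)$. This involves $JF$ at $G(0)$ rather than at $0$. To repair this, let $\operatorname{TA}_n(k)$ act on the finite set
\[
X=k^n\times\operatorname{GL}_n(k)
\]
of pairs (point, invertible matrix) by
\[
F\cdot(a,M)=\bigl(F(a),\,JF(a)M\bigr).
\]
By the chain rule this is an action compatible with geometric composition: $(FG)\cdot(a,M)=F\cdot(G\cdot(a,M))$. It therefore gives a homomorphism
\[
\rho:\operatorname{TA}_n(k)\to\operatorname{Sym}(X).
\]

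\emph{Step 3: identify the stabilizer and conclude.} The stabilizer of the base point $(0,I_n)$ is
\[
\{F:F(0)=0,\ JF(0)=I_n\}=U_n(k).
\]
This shows $U_n(k)$ is a subgroup. By orbit–stabilizer,
\[
[\operatorname{TA}_n(k):U_n(k)]=|\text{orbit of }(0,I_n)|\le|X|=q^n|\operatorname{GL}_n(k)|.
\]
The orbit is in fact all of $X$, since the affine maps $x\mapsto a+Mx$ already reach every pair. We do not need this for the inequality.

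The only point requiring care is Step 2: verifying that the action respects the geometric composition convention $FG=F\circ G$, and noticing that we should work with cosets and the orbit–stabilizer theorem rather than claim that $\phi$ itself is a homomorphism. Normality of $U_n(k)$ is not required for the statement and is not claimed.
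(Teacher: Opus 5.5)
Your argument is correct, and it uses the same ingredients as the paper (orbit--stabilizer plus the chain rule), arranged differently. The paper works in two stages. It first takes the stabilizer $G_0$ of the origin for the action on $k^n$, which has index at most $q^n$. On $G_0$ the chain rule makes $F\mapsto JF(0)$ a genuine homomorphism $J_0$ to $\operatorname{GL}_n(k)$ with kernel $U_n(k)$, so $[G_0:U_n(k)]\le|\operatorname{GL}_n(k)|$.

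You merge these two stages into a single action on frames $X=k^n\times\operatorname{GL}_n(k)$ and apply orbit--stabilizer once, at $(0,I_n)$. The orbit of $(0,I_n)$ fibres over the orbit of $0$, and each fibre has $|\operatorname{im}J_0|$ elements, so the two counts agree. Your packaging gives slightly more: affine maps act transitively on $X$, so you get the exact index $q^n|\operatorname{GL}_n(k)|$. The paper's two inequalities are in fact equalities for the same reason, but it does not say so. The price is that your action needs $JF(a)$ invertible at every $a\in k^n$, not only at $0$. This does follow from the constant Jacobian determinant lemma, but your Step 1 states it only at the origin, so state it in general.

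Delete your opening sentence and Step 1. No homomorphism can have kernel exactly $U_n(k)$, because $U_n(k)$ is not normal: conjugating $E_1(x_2^2)\in U_n(k)$ by the translation $x\mapsto x+e_2$ gives $(x_1+(x_2-1)^2,x_2,\ldots,x_n)$, which does not fix the origin. The map $\phi$ plays no role in the argument you actually give, and your closing remark about normality already contradicts that opening sentence.
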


\begin{proof}
Let
\[
G_0=\{F\in\operatorname{TA}_n(k):F(0)=0\}.
\]
The group $\operatorname{TA}_n(k)$ acts on the finite set $k^n$, and $G_0$ is the stabilizer of the origin. Hence
\[
[\operatorname{TA}_n(k):G_0]\le q^n.
\]

For $F,G\in G_0$, geometric composition and the chain rule give
\[
J(FG)(0)=JF(G(0))JG(0)=JF(0)JG(0).
\]
Therefore
\[
J_0:G_0\longrightarrow\operatorname{GL}_n(k),
\qquad
F\longmapsto JF(0),
\]
is a homomorphism. Its kernel is $U_n(k)$, so
\[
[G_0:U_n(k)]=|\im J_0|\le|\operatorname{GL}_n(k)|.
\]
Multiplying the two index bounds proves the result.
\end{proof}

\begin{lemma}[Schreier]
\label{lem:Schreier}
Let $G$ be finitely generated and let $H\le G$ have finite index. Then $H$ is finitely generated.
\end{lemma}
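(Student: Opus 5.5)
The plan is the classical Reidemeister--Schreier argument via a transversal, with no input from the rest of the paper. Let $S=\{s_1,\ldots,s_m\}$ be a finite generating set of $G$. Replacing $S$ by $S\cup S^{-1}$, we may assume $S$ is closed under inverses. Then every element of $G$ is a finite product of elements of $S$, and no inverses are needed. Choose a right transversal $T$ for $H$ in $G$, so that $G=\bigsqcup_{t\in T}Ht$, with $1\in T$. $T$ is finite because $[G:H]<\infty$. For $g\in G$, let $\overline{g}\in T$ denote the unique representative with $Hg=H\overline{g}$.

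The key step is to show that the finite set
\[
Y=\{\,ts\,\overline{ts}^{-1}: t\in T,\ s\in S\,\}
\]
lies in $H$ and generates $H$. Membership holds because $ts$ and $\overline{ts}$ lie in the same right coset of $H$. For generation, take $h\in H$ and write $h=s_{i_1}s_{i_2}\cdots s_{i_\ell}$. Put $t_0=1$ and $t_k=\overline{t_{k-1}s_{i_k}}$. Inductively, $t_k=\overline{s_{i_1}\cdots s_{i_k}}$. Since $h\in H$, we get $t_\ell=\overline{h}=1$. Inserting $t_k^{-1}t_k$ between consecutive letters produces the telescoping identity
\[
h=\prod_{k=1}^{\ell}\bigl(t_{k-1}s_{i_k}t_k^{-1}\bigr).
\]
Here $t_\ell=1$ and $t_0=1$ make the ends match. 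Each factor lies in $Y$.

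Therefore $H=\langle Y\rangle$ with $|Y|\le[G:H]\cdot|S|<\infty$. The only point needing care is the bookkeeping in the telescoping product, specifically checking that $t_k$ really equals $\overline{t_{k-1}s_{i_k}}$ at each step. This follows from $Ht_{k-1}=Hs_{i_1}\cdots s_{i_{k-1}}$ by right multiplication by $s_{i_k}$. I do not expect any genuine obstacle.

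In the sequel, this lemma is combined with \cref{prop:finite-index}. If $\operatorname{TA}_n(k)$ were finitely generated, then $U_n(k)$ would be too. Its mod-$p$ abelianization $Q_n(k)$ would then be a finite-dimensional $\Fp$-vector space, contradicting $\dim_{\Fp}Q_n(k)=\aleph_0$.
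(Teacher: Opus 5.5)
Your proposal is correct and follows the paper's proof essentially verbatim: a symmetric finite generating set, a finite right transversal containing $1$, the Schreier generators $ts\,\overline{ts}^{-1}$, and the telescoping factorization of $h$ using $t_\ell=\overline{h}=1$. Your explicit check that $t_k=\overline{t_{k-1}s_{i_k}}$ is exactly the step the paper leaves implicit when it asserts that every factor lies in the generating set.
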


\begin{proof}
Choose a finite symmetric generating set $S$ for $G$, and choose a finite set $T$ of representatives for the right cosets $H\backslash G$, with $1\in T$. For $g\in G$, let $\overline g\in T$ denote the representative of $Hg$.

The finite set
\[
\mathcal S_H=
\{ts\,\overline{ts}^{-1}:t\in T,\ s\in S\}
\]
is contained in $H$. Let
\[
h=s_1\cdots s_m\in H,
\qquad s_i\in S,
\]
and put
\[
t_i=\overline{s_1\cdots s_i},
\qquad t_0=1.
\]
Since $h\in H$, one has $t_m=1$. The product telescopes:
\[
h=
\prod_{i=1}^m
\bigl(t_{i-1}s_it_i^{-1}\bigr).
\]
Every factor belongs to $\mathcal S_H$, so this finite set generates $H$.
\end{proof}

\begin{proposition}[Dimension one]
\label{prop:dimension-one}
For every field $k$,
\[
\operatorname{GA}_1(k)=\operatorname{TA}_1(k)=\operatorname{Aff}_1(k).
\]
If $k$ is finite, this group is finite.
\end{proposition}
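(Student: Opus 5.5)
We will show that every automorphism $F$ of $\A^1_k$ is affine. Since affine maps are tame by definition, this gives the chain
\[
\operatorname{Aff}_1(k)\subseteq\operatorname{TA}_1(k)\subseteq\operatorname{GA}_1(k)\subseteq\operatorname{Aff}_1(k),
\]
and the three groups coincide.

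Let $F=f(x)\in k[x]$ be an automorphism of $\A^1_k$, and let $G=g(x)$ be its inverse. Then $f(g(x))=x$ and $g(f(x))=x$. We use degrees rather than the Jacobian lemma, because in characteristic $p$ we could have $f'$ constant while $f$ has degree $p+1$. First, $f$ and $g$ cannot be constant: otherwise $f\circ g$ or $g\circ f$ would be constant, not $x$. Since $k$ is a field, $k[x]$ is a domain, so leading coefficients multiply to a nonzero value. Hence
\[
\deg(f\circ g)=\deg f\cdot\deg g .
\]
This product equals $1$, so $\deg f=\deg g=1$. Writing $f=ax+b$, the condition $\deg f=1$ forces $a\ne0$. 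Thus $F$ is affine with invertible linear part, i.e. $F\in\operatorname{Aff}_1(k)$.

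If $k$ is finite, $\operatorname{Aff}_1(k)$ is in bijection with the pairs $(a,b)\in k^\times\times k$. So it is finite, of order $q(q-1)$ when $k=\Fq$.

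No step is a real obstacle. The only point needing care is justifying the degree formula for composition, which relies on $k[x]$ having no zero divisors.
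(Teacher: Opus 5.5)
Your proof is correct and matches the paper's own argument: multiplicativity of degree under composition forces $\deg F=1$, so $F=ax+b$ with $a\neq0$, and finiteness over a finite field follows by counting pairs $(a,b)$. One harmless correction to your side remark: a polynomial of degree $p+1$ always has derivative of degree $p$ in characteristic $p$, so the example you meant is something like $x^p+x$ (degree $p$, derivative $1$); this does not affect the proof.
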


\begin{proof}
Let $F\in k[x]$ have a polynomial inverse $G\in k[x]$. Both polynomials are nonconstant, and
\[
1=\deg x=\deg(F\circ G)=(\deg F)(\deg G).
\]
Thus $\deg F=1$, so $F(x)=ax+b$ with $a\in k^\times$. Over a finite field there are only finitely many such maps.
\end{proof}

\begin{remark}[The plane case]
The argument below applies directly for $n=2$ and does not use a structural decomposition of the plane automorphism group. In particular, the non-finite-generation statement in dimension two is obtained from the same Cartier-character quotient as in higher dimensions.
\end{remark}

\section{Polynomial de Rham cohomology and relative Cartier}
\label{sec:cartier}

From now on,
\[
k=\Fq,
\qquad q=p^a,
\qquad R=k[x_1,\ldots,x_n],
\qquad n\ge2.
\]
Let
\[
S=\Spec\Fp,
\qquad X=\Spec R.
\]
We use the polynomial de Rham complex relative to the prime field,
\[
\Omega_R^\bullet=\Omega_{R/\Fp}^\bullet.
\]
The extension $k/\Fp$ is finite separable and hence finite étale, so $\Omega^1_{k/\Fp}=0$. The transitivity sequence for Kähler differentials therefore identifies
\[
\Omega_{R/\Fp}^\bullet\cong\Omega_{R/k}^\bullet.
\]
Write
\[
H^m_{\mathrm{dR}}(R)=H^m(\Omega_R^\bullet).
\]

For $I=\{i_1<\cdots<i_m\}\subseteq\{1,\ldots,n\}$, put
\[
dx_I=dx_{i_1}\wedge\cdots\wedge dx_{i_m},
\qquad
\rho_I=
\left(\prod_{i\in I}x_i^{p-1}\right)dx_I.
\]
Since $k$ is perfect,
\[
R^p=\{f^p:f\in R\}=k[x_1^p,\ldots,x_n^p].
\]
Here $R^p$ is ring notation and is unrelated to the group notation $G^{[p]}$.

\begin{lemma}[Polynomial de Rham cohomology]
\label{lem:derham-basis}
For every $0\le m\le n$,
\[
H^m_{\mathrm{dR}}(R)
=
\bigoplus_{\substack{I\subseteq\{1,\ldots,n\}\\|I|=m}}
R^p[\rho_I].
\]
Equivalently, a $k$-basis is given by the classes of $x^\alpha dx_I$ for which
\[
\alpha_i\equiv p-1\pmod p\quad(i\in I),
\qquad
\alpha_i\equiv0\pmod p\quad(i\notin I).
\]
\end{lemma}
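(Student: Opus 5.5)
The plan is to reduce to one variable with a Künneth-type decomposition of the polynomial de Rham complex, and then compute the one-variable cohomology by hand. By the identification $\Omega^\bullet_{R/\Fp}\cong\Omega^\bullet_{R/k}$, we may work over $k$. The complex $\Omega^\bullet_{R/k}$ is the tensor product over $k$ of the one-variable complexes
\[
C_i:\quad k[x_i]\xrightarrow{\ d\ }k[x_i]\,dx_i .
\]
Each $C_i$ is a complex of $k$-vector spaces, so the algebraic Künneth formula over the field $k$ gives
\[
H^\bullet(\Omega^\bullet_{R/k})\cong\bigotimes_i H^\bullet(C_i).
\]

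\textbf{Refinement by multidegree.} To make the statement about explicit classes transparent, I would also decompose by multidegree. The differential $d(x^\alpha dx_I)=\sum_{j\notin I}\alpha_j x^{\alpha-e_j}dx_j\wedge dx_I$ preserves the total exponent vector $\beta$, where $\beta=\alpha+\mathbf 1_I$ is the exponent of $x^\alpha\prod_{i\in I}x_i$. So $\Omega^\bullet$ is a direct sum of finite complexes $K_\beta$, and $K_\beta$ is the tensor product of one-variable pieces $K_{\beta_i}$. Here $K_{\beta_i}$ is $k x_i^{\beta_i}\to k x_i^{\beta_i-1}dx_i$ when $\beta_i\ge1$, and just $k$ in degree $0$ when $\beta_i=0$.

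\textbf{One-variable pieces.} For $\beta_i\ge1$ the map is multiplication by $\beta_i$.
\begin{itemize}
\item If $p\nmid\beta_i$, the piece is acyclic.
\item If $p\mid\beta_i$, the map is zero. The cohomology is then spanned by $x_i^{\beta_i}$ in degree $0$ and by $x_i^{\beta_i-1}dx_i$ in degree $1$.
\item If $\beta_i=0$, the piece contributes $k$ in degree $0$.
\end{itemize}

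\textbf{Assembling the pieces.} Künneth over $k$ applied to each $K_\beta$ shows that $H(K_\beta)=0$ unless every $\beta_i\equiv0\pmod p$. In that case the cohomology has basis the wedge products $x^\alpha dx_I$ with $\alpha_i=\beta_i-1\equiv p-1$ for $i\in I$ (so $\beta_i\ge1$) and $\alpha_i=\beta_i\equiv0$ for $i\notin I$. Summing over $\beta$ gives exactly the stated $k$-basis in degree $m=|I|$.

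\textbf{The $R^p$-module form.} The basis elements attached to a fixed $I$ are precisely $x^{p\gamma}\rho_I$ with $\gamma\in\N_0^n$. Moreover $d(f^p\omega)=f^p\,d\omega$, so cohomology is an $R^p$-module, and multiplication by $x^{p\gamma}$ and by scalars of $k$ acts on these classes in the obvious way. Since $k$ is perfect, $k x^{p\gamma}\subseteq R^p$, so these classes form the free $R^p$-module $R^p[\rho_I]$. Summing over $I$ gives the displayed direct sum.

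\textbf{Main obstacle.} I expect none of the steps to be hard. The only care needed is bookkeeping: the case $\beta_i=0$, where only the degree-$0$ piece exists, and the fact that $k$, not $\Fp$, is the ground field for Künneth. The latter is harmless by the étale identification stated before the lemma.
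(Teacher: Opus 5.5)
Your proof is correct and takes the paper's route: compute the one-variable cohomology ($H^0=k[x^p]$, $H^1=k[x^p]\,x^{p-1}dx$) and then apply the K\"unneth formula over the field $k$, where there is no Tor term. Your splitting into the finite multidegree complexes $K_\beta$ is only finer bookkeeping of the same argument, though it does make the explicit $k$-basis and the free $R^p$-module description immediate.
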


\begin{proof}
For one variable the de Rham complex is
\[
0\longrightarrow k[x]
\xrightarrow{\ d\ }
k[x]dx
\longrightarrow0.
\]
Since $d(x^r)=rx^{r-1}dx$, its cohomology is
\[
H^0=k[x^p],
\qquad
H^1=k[x^p]x^{p-1}dx.
\]
Indeed, $x^sdx$ is a derivative unless $s\equiv p-1\pmod p$, and the kernel in degree zero is $k[x^p]$.

The $n$-variable de Rham complex is the tensor product over $k$ of the $n$ one-variable complexes. Since $k$ is a field, the Künneth formula has no Tor term. Choosing degree-zero or degree-one cohomology in each tensor factor gives the asserted decomposition.
\end{proof}

Assign weight one to every $x_i$ and every $dx_i$. The differential preserves weight, so the de Rham complex is a direct sum of homogeneous weight subcomplexes,
\[
\Omega_R^\bullet
=
\bigoplus_{D\ge0}\Omega_R^\bullet[D],
\]
and de Rham cohomology inherits the corresponding direct-sum weight grading.

Set
\[
\omega=dx_1\wedge\cdots\wedge dx_n
\]
and, for $1\le j\le n$,
\[
\omega_j=(-1)^{j-1}
 dx_1\wedge\cdots\wedge\widehat{dx_j}\wedge\cdots\wedge dx_n,
\qquad
dx_j\wedge\omega_j=\omega.
\]
Put
\[
\rho_j=
\left(\prod_{i\ne j}x_i^{p-1}\right)\omega_j.
\]
Then \cref{lem:derham-basis} gives
\begin{equation}
H^{n-1}_{\mathrm{dR}}(R)
=
\bigoplus_{j=1}^nR^p[\rho_j].
\end{equation}
Its lowest nonzero weight is
\begin{equation}
w_0=p(n-1),
\end{equation}
and the corresponding subspace is
\begin{equation}
W_0=\spanop_k\{[\rho_1],\ldots,[\rho_n]\}.
\end{equation}
Every other nonzero weight is at least $w_0+p$. Let
\[
\pr_0:H^{n-1}_{\mathrm{dR}}(R)\longrightarrow W_0
\]
be the projection associated with the weight decomposition.

\subsection{Relative Cartier and its naturality}

The morphism $\Spec k\to S$ is finite étale and hence smooth, and $X=\A_k^n\to\Spec k$ is smooth. Their composite $X\to S$ is therefore smooth. Let
\[
X^{(p)}=X\times_{S,\operatorname{Fr}_S}S
\]
and let
\[
\operatorname{Fr}_{X/S}:X\longrightarrow X^{(p)}
\]
be relative Frobenius; see \cite[Tag~0CC6, Definition~33.36.4]{StacksProject}. Functoriality of relative Frobenius is \cite[Tag~0CC6, Lemma~33.36.5]{StacksProject}. Since the Frobenius of $S=\Spec\Fp$ is the identity, the projection $X^{(p)}\to X$ is a canonical isomorphism of $S$-schemes. On coordinate rings this is
\[
R^{(p)}=R\otimes_{\Fp,\operatorname{Fr}_{\Fp}}\Fp
=R\otimes_{\Fp,\id}\Fp
\cong R.
\]

The relative Cartier theorem for the smooth morphism $X\to S$ gives an isomorphism of graded $\mathcal O_{X^{(p)}}$-algebras
\[
C^{-1}_{X/S}:
\Omega^m_{X^{(p)}/S}
\xrightarrow{\ \sim\ }
\mathcal H^m\!\left(
\operatorname{Fr}_{X/S,*}\Omega^\bullet_{X/S}
\right);
\]
see \cite[Theorem~7.2]{Katz1970}. Since $X^{(p)}$ is affine, taking global sections identifies the right-hand side with the cohomology of the complex of global polynomial forms. Under the canonical identification $X^{(p)}\cong X$, we obtain an additive multiplicative isomorphism
\[
C^{-1}:\Omega_R^m\xrightarrow{\ \sim\ }H^m_{\mathrm{dR}}(R).
\]
As a homomorphism of graded algebras, inverse Cartier is characterized on functions and their differentials by
\begin{equation}
C^{-1}(g)=[g^p],
\qquad
C^{-1}(dg)=[g^{p-1}dg]
\qquad(g\in R).
\label{eq:Cartier-generators}
\end{equation}
Equivalently, for a coordinate monomial form one has
\begin{equation}
C^{-1}(h\,dx_I)
=
\left[
 h^p\left(\prod_{i\in I}x_i^{p-1}\right)dx_I
\right].
\label{eq:Cartier-inverse}
\end{equation}
We denote its inverse by
\[
C:H^m_{\mathrm{dR}}(R)\longrightarrow\Omega_R^m.
\]

\begin{lemma}[Coefficient semilinearity]
\label{lem:Cartier-semilinearity}
For $c\in k$ and $\alpha\in H^m_{\mathrm{dR}}(R)$,
\[
C(c\alpha)=c^{1/p}C(\alpha),
\]
where $c^{1/p}$ is the unique $p$th root of $c$ in $k$.
\end{lemma}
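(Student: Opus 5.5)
The plan is to prove the equivalent statement for $C^{-1}$: for every $\beta\in\Omega^m_R$ and $c\in k$,
\[
C^{-1}(c\beta)=c^p\,C^{-1}(\beta).
\]
Once this holds, the lemma follows. Given $\alpha$, put $\beta=C(\alpha)$ and let $b=c^{1/p}$, which exists and is unique in $k$ because Frobenius is a bijection on the finite field $k$. Then
\[
C^{-1}\bigl(bC(\alpha)\bigr)=b^pC^{-1}(C(\alpha))=c\alpha.
\]
Applying $C$ to both sides gives $C(c\alpha)=c^{1/p}C(\alpha)$.

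To prove the $p$-semilinearity of $C^{-1}$, I will use that $C^{-1}$ is additive and multiplicative, that is, a homomorphism of graded algebras, as recorded just before \eqref{eq:Cartier-generators}. Regard the constant $c\in k\subseteq R=\Omega^0_R$ as a function. By \eqref{eq:Cartier-generators},
\[
C^{-1}(c)=[c^p].
\]
Multiplicativity then gives
\[
C^{-1}(c\beta)=C^{-1}(c)\,C^{-1}(\beta)=[c^p]\cdot C^{-1}(\beta)=c^p\,C^{-1}(\beta),
\]
where the last equality uses that multiplication by the closed $0$-form $c^p$ on de Rham cohomology is ordinary scalar multiplication. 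As a cross-check, the explicit formula \eqref{eq:Cartier-inverse} gives the same result on monomial forms $h\,dx_I$ with $h\mapsto h^p$. Extending by additivity to all of $\Omega^m_R$ recovers the identity.

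The only point needing care is that the inverse Cartier map is $R$-semilinear over Frobenius and not $k$-linear. This enters through the identification $X^{(p)}\cong X$, which is made over $\Fp$ and not over $k$: the $\mathcal O_{X^{(p)}}$-linearity of $C^{-1}_{X/S}$ becomes Frobenius-semilinearity on $R$ under this identification. Since \eqref{eq:Cartier-generators} already records $C^{-1}(g)=[g^p]$ for all $g\in R$, including constants, I expect no real obstacle. The main thing to check is that the constant $c$ is treated as an element of $R$ and is not pulled out as a $k$-scalar beforehand.
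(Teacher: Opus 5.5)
Your proof is correct and follows the paper's own argument. Both establish $C^{-1}(b\eta)=b^pC^{-1}(\eta)$ for $b^p=c$ and then apply $C$. You obtain that identity from multiplicativity together with $C^{-1}(c)=[c^p]$ in \eqref{eq:Cartier-generators}, whereas the paper reads it directly off the coordinate formula \eqref{eq:Cartier-inverse}; this difference is inessential.
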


\begin{proof}
Choose $b\in k$ with $b^p=c$, and write $\alpha=C^{-1}(\eta)$. The coordinate formula \eqref{eq:Cartier-inverse} gives
\[
c\alpha=b^pC^{-1}(\eta)=C^{-1}(b\eta).
\]
Applying $C$ yields
\[
C(c\alpha)=b\eta=c^{1/p}C(\alpha).
\]
\end{proof}

In degree $n-1$, \eqref{eq:Cartier-inverse} gives
\begin{equation}
C([h^p\rho_j])=h\omega_j.
\label{eq:Cartier-degree-n-minus-one}
\end{equation}

\begin{proposition}[Cartier naturality]
\label{prop:Cartier-naturality}
Let $F:X\to X$ be a polynomial $k$-endomorphism. For every de Rham class $\alpha$,
\[
C(F^*\alpha)=F^*C(\alpha).
\]
Here the Frobenius twist is taken relative to
$S=\operatorname{Spec}\mathbf F_p$, not relative to
$\operatorname{Spec}k$.
\end{proposition}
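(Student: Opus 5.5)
Since $C$ is the inverse of the bijection $C^{-1}:\Omega_R^m\to H^m_{\mathrm{dR}}(R)$, it suffices to prove the inverse-Cartier naturality
\[
F^*C^{-1}(\eta)=C^{-1}(F^*\eta)\qquad(\eta\in\Omega_R^m).
\]
Indeed, applying this to $\eta=C(\alpha)$ gives $F^*\alpha=C^{-1}(F^*C(\alpha))$, and then applying $C$ yields the statement. Here $F^*$ acts on classes because $F^*$ commutes with $d$.

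For $C^{-1}$ naturality I would argue directly from the characterization \eqref{eq:Cartier-generators}, without passing through the Stacks functoriality formalism. Both maps
\[
\eta\mapsto F^*C^{-1}(\eta),
\qquad
\eta\mapsto C^{-1}(F^*\eta)
\]
are additive and multiplicative from $\Omega_R^\bullet$ to $H^\bullet_{\mathrm{dR}}(R)$. This holds because $F^*$ is a ring map of differential graded algebras, hence induces an algebra map on cohomology, and $C^{-1}$ is an algebra map.

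As an algebra, $\Omega_R^\bullet$ is generated over $\Fp$ by functions $g\in R$ and exact forms $dg$. Note that the coefficients in $k$ are themselves functions, so no $k$-linearity is needed. This is exactly why the twist is taken over $\Fp$: the relevant Frobenius is the absolute $p$-power on all of $R$, including the constants. It therefore suffices to check the two maps agree on generators. On functions,
\[
F^*C^{-1}(g)=F^*[g^p]=[(F^*g)^p]=C^{-1}(F^*g),
\]
since $F^*$ is a ring homomorphism. On exact forms,
\[
F^*C^{-1}(dg)=[(F^*g)^{p-1}\,d(F^*g)]=C^{-1}(d(F^*g))=C^{-1}(F^*dg).
\]
Multiplicativity then extends the agreement to products and additivity to sums.

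The one point I expect to require care is confirming that \eqref{eq:Cartier-generators} is well defined on arbitrary $g$, i.e.\ that $[g^{p-1}dg]$ is additive in $g$. This is the standard fact that $(a+b)^{p-1}d(a+b)-a^{p-1}da-b^{p-1}db$ is exact. It is already part of Katz's theorem as cited, so I would simply invoke \cite[Theorem~7.2]{Katz1970} for it. Alternatively, one could cite functoriality of $C^{-1}_{X/S}$ together with \cite[Tag~0CC6, Lemma~33.36.5]{StacksProject} and the identification $X^{(p)}\cong X$, noting that $F^{(p)}=F$ under this identification. That identification holds precisely because $\operatorname{Fr}_S=\id$ for $S=\Spec\Fp$, whereas relative to $\Spec k$ one would get the coefficient-twisted map $F^{(q)}$ instead.
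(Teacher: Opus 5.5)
Your proposal is correct and is essentially the paper's own argument: reduce to naturality of $C^{-1}$, verify it on the generators $g$ and $dg$ via \eqref{eq:Cartier-generators}, extend by additivity and multiplicativity, and substitute $\eta=C(\alpha)$. The only cosmetic difference is that the paper carries $F^{(p)*}$ through the computation and identifies it with $F^*$ at the end, whereas you use $F^*$ from the start; separately, your closing aside has a small slip, since relative to $\operatorname{Spec}k$ the twist is along the $p$-power Frobenius of $k$ and so gives $F$ with Frobenius-twisted coefficients rather than anything naturally called $F^{(q)}$ (the $q$-power Frobenius of $k$ is the identity), but that remark plays no role in your proof.
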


\begin{proof}
Before identifying the Frobenius twists, functoriality of relative Frobenius (the cited Stacks lemma) gives a commutative square
\[
\begin{CD}
X @>{F}>> X\\
@V{\operatorname{Fr}_{X/S}}VV
@VV{\operatorname{Fr}_{X/S}}V\\
X^{(p)} @>{F^{(p)}}>> X^{(p)}.
\end{CD}
\]
We verify the corresponding naturality of inverse Cartier from its coordinate formulas.

For $g\in R$, the generator identities \eqref{eq:Cartier-generators} give
\[
F^*C^{-1}(g)
=F^*[g^p]
=[(F^*g)^p]
=C^{-1}(F^{(p)*}g),
\]
and
\[
\begin{aligned}
F^*C^{-1}(dg)
&=F^*[g^{p-1}dg]\\
&=[(F^*g)^{p-1}d(F^*g)]\\
&=C^{-1}\!\left(d(F^{(p)*}g)\right).
\end{aligned}
\]
Since the algebra of differential forms is generated by functions and their differentials, additivity and multiplicativity give
\[
F^*C^{-1}(\eta)=C^{-1}(F^{(p)*}\eta)
\]
for every differential form $\eta$.

Let $\eta=C(\alpha)$. Then
\[
F^*\alpha
=F^*C^{-1}(\eta)
=C^{-1}(F^{(p)*}\eta).
\]
Applying $C$ gives
\[
C(F^*\alpha)=F^{(p)*}C(\alpha).
\]

Because the twist is relative to $S=\operatorname{Spec}\mathbf F_p$,
whose Frobenius is the identity, the canonical identification
$X^{(p)}\cong X$ is the relative $S$-scheme identification. In particular,
this is not the Frobenius twist over $\operatorname{Spec}k$; no Frobenius is
applied to coefficients in $k$. Under this identification, the coordinate
ring of the twist is $R\otimes_{\Fp,\id}\Fp\cong R$, and $F^{(p)}$ has exactly
the same coordinate polynomials and coefficients as $F$. Hence
$F^{(p)*}=F^*$, proving the asserted formula.
\end{proof}

The operator $C$ does not generally send a cohomology class to a closed form. Thus it cannot be iterated on all de Rham classes.

\begin{example}[Failure of unrestricted Cartier iteration]
\label{ex:Cartier-not-iterable}
The class $[x_1^p\rho_1]$ is well defined, but
\[
C([x_1^p\rho_1])=x_1\omega_1
\]
and
\[
d(x_1\omega_1)=dx_1\wedge\omega_1=\omega\ne0.
\]
\end{example}

Whenever $C(\alpha)$ is closed, define the partial operation
\begin{equation}
\mathcal T(\alpha)=[C(\alpha)]\in H^{n-1}_{\mathrm{dR}}(R).
\end{equation}

The same relative-Cartier decomposition underlies Ma's completed formal
flux construction \cite[Sections~3.1--3.2]{MaCartierFrattini2026}. The exact
comparison with the polynomial tame characters is deferred to
\cref{prop:Ma-comparison}.

\section{Flux and tame Cartier-admissibility}
\label{sec:flux}

Set
\[
\lambda=x_1\omega_1=x_1\,dx_2\wedge\cdots\wedge dx_n.
\]
Then $d\lambda=\omega$. The corresponding completed formal
pullback-difference construction appears in
\cite[Section~3.2 and Proposition~3.3]{MaCartierFrattini2026}; here we work with polynomial
forms and then prove repeated descent on the tame subgroup.

\begin{definition}[Flux]
For $F\in\operatorname{STA}_n(k)$, define
\[
\Phi_0(F)=[F^*\lambda-\lambda]
\in H^{n-1}_{\mathrm{dR}}(R).
\]
The form is closed because
\[
d(F^*\lambda-\lambda)
=F^*\omega-\omega
=(\det JF-1)\omega=0.
\]
\end{definition}

\begin{lemma}[Flux cocycle]
\label{lem:flux-cocycle}
For $F,G\in\operatorname{STA}_n(k)$,
\begin{equation}
\Phi_0(FG)=G^*\Phi_0(F)+\Phi_0(G).
\end{equation}
\end{lemma}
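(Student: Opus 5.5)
The identity is a formal consequence of the contravariance of pullback, $(FG)^*=G^*F^*$, fixed in \cref{sec:groups}. I will add and subtract a telescoping term at the level of forms and only then pass to de Rham classes.

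First I compute at the level of differential forms. Since $(FG)^*=G^*F^*$,
\[
(FG)^*\lambda-\lambda
=G^*(F^*\lambda)-\lambda
=G^*(F^*\lambda-\lambda)+\bigl(G^*\lambda-\lambda\bigr).
\]
Both bracketed forms are closed by the closedness computation in the definition of $\Phi_0$, because $F,G\in\operatorname{STA}_n(k)$. Also $FG\in\operatorname{STA}_n(k)$, since $\det J(FG)=\bigl(\det JF\circ G\bigr)\det JG=1$ by the chain rule. Hence all three flux classes are defined.

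Next I pass to cohomology. Pullback $G^*$ is a ring endomorphism of $\Omega_R^\bullet$ commuting with $d$, so it induces a well-defined map on $H^{n-1}_{\mathrm{dR}}(R)$ satisfying $G^*[\eta]=[G^*\eta]$ for closed $\eta$. Taking classes in the displayed identity gives
\[
\Phi_0(FG)=G^*\Phi_0(F)+\Phi_0(G).
\]

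No step is a genuine obstacle. The only point needing care is the order of composition: in the geometric convention $FG=F\circ G$, the pullback is $G^*F^*$. This is exactly why the cocycle is twisted by $G^*$ acting on $\Phi_0(F)$, rather than by $F^*$ acting on $\Phi_0(G)$.
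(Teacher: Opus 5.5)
Your proof is correct and is essentially the paper's argument: use $(FG)^*=G^*F^*$ to write $(FG)^*\lambda-\lambda=G^*(F^*\lambda-\lambda)+(G^*\lambda-\lambda)$, then pass to de Rham classes. Your additional checks are details the paper leaves implicit: that $FG\in\operatorname{STA}_n(k)$ via the chain rule, that each form is closed, and that $G^*$ descends to cohomology.
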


\begin{proof}
Using $(FG)^*=G^*F^*$, one obtains
\[
\begin{aligned}
\Phi_0(FG)
&=[G^*F^*\lambda-\lambda]\\
&=G^*[F^*\lambda-\lambda]+[G^*\lambda-\lambda].
\end{aligned}
\]
\end{proof}

\subsection{Flux on tame generators}

\begin{lemma}[Affine flux and the exceptional plane]
\label{lem:affine-flux}
Let $A\in\operatorname{SAff}_n(k)$.
\begin{enumerate}[label=\textup{(\roman*)}]
\item If $(n,p)\ne(2,2)$, then
\[
\Phi_0(A)=0.
\]
\item If $(n,p)=(2,2)$, then
\[
\Phi_0(A)\in W_0
\qquad\text{and}\qquad
\mathcal T(\Phi_0(A))=0.
\]
\end{enumerate}
\end{lemma}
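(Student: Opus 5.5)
The plan is a weight count. Write $A(x)=Mx+b$ with $M\in\operatorname{SL}_n(k)$ and $b\in k^n$, so that $A^*x_i=\sum_l M_{il}x_l+b_i$ has weight at most one. Then
\[
A^*\lambda-\lambda=A_1\,dA_2\wedge\cdots\wedge dA_n-x_1\,dx_2\wedge\cdots\wedge dx_n .
\]
Each $dA_i$ is a constant linear combination of the $dx_l$, so it is homogeneous of weight one. The coefficient $A_1$ splits into pieces of weight one and zero. Hence $A^*\lambda-\lambda$ is a closed $(n-1)$-form whose homogeneous components lie only in weights $n-1$ and $n$.

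Because the differential preserves weight, each homogeneous component of a closed form is itself closed. The class $\Phi_0(A)$ is therefore the sum of the classes of these components, one in each weight subcomplex $\Omega_R^\bullet[D]$. By \cref{lem:derham-basis}, the nonzero weights of $H^{n-1}_{\mathrm{dR}}(R)$ are all at least $w_0=p(n-1)$, since each basis class $x^\alpha dx_I$ with $|I|=n-1$ has weight at least $p(n-1)$. We have
\[
p(n-1)\ge 2(n-1)=n+(n-2)\ge n,
\]
with equality throughout exactly when $p=2$ and $n=2$. Thus for $(n,p)\ne(2,2)$ both weights $n-1$ and $n$ lie strictly below $w_0$. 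The weight-graded cohomology vanishes there, so both components are exact and $\Phi_0(A)=0$, which proves (i).

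For $(n,p)=(2,2)$ the same argument kills the weight-one component. The weight-two component lies in the weight-$w_0$ part of cohomology, which is exactly $W_0=\spanop_k\{[\rho_1],[\rho_2]\}$. Hence $\Phi_0(A)=c_1[\rho_1]+c_2[\rho_2]$ for some $c_1,c_2\in k$.

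Next I compute $C$ on this class using \eqref{eq:Cartier-degree-n-minus-one} together with \cref{lem:Cartier-semilinearity}, taking $h$ constant:
\[
C(\Phi_0(A))=c_1^{1/2}\omega_1+c_2^{1/2}\omega_2=c_1^{1/2}\,dx_2-c_2^{1/2}\,dx_1 .
\]
This form is closed, so $\mathcal T$ is defined on $\Phi_0(A)$. It is also exact, being $d\bigl(c_1^{1/2}x_2-c_2^{1/2}x_1\bigr)$, so $\mathcal T(\Phi_0(A))=0$, which proves (ii).

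The only delicate point is justifying that a closed form of weight below $w_0$ is exact. This holds because the weight decomposition of the complex passes to cohomology, as noted after \cref{lem:derham-basis}, so I would state that step carefully. One can also observe that $\Phi_0(A)$ is in general nonzero in the exceptional case. For example, the shear $(x_1+x_2,x_2)$ gives $A^*\lambda-\lambda=x_2\,dx_2=\rho_1$. No explicit computation is needed for the lemma, however.
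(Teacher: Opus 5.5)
Your proof is correct and is essentially the paper's argument. The pullback difference $A^*\lambda-\lambda$ lives only in weights at most $n$, and these lie strictly below $w_0=p(n-1)$ unless $(n,p)=(2,2)$; in that exceptional case the class lies in $W_0$, and Cartier sends it to the exact constant form $c_1^{1/2}\omega_1+c_2^{1/2}\omega_2=d\bigl(c_1^{1/2}x_2-c_2^{1/2}x_1\bigr)$, which is only a slightly more explicit version of the paper's observation that constant one-forms are exact.
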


\begin{proof}
Since $A$ is affine, every coefficient of $A^*\lambda-\lambda$ has polynomial degree at most one, while its differential part has degree $n-1$. Thus every homogeneous term has weight at most $n$.

The lowest nonzero weight of $H^{n-1}_{\mathrm{dR}}(R)$ is
\[
w_0=p(n-1).
\]
For $n,p\ge2$,
\[
p(n-1)-n=(p-1)n-p.
\]
If $p=2$, this equals $n-2$ and vanishes only for $n=2$. If $p\ge3$, then
\[
(p-1)n-p\ge2(p-1)-p=p-2>0.
\]
Hence $n<w_0$ unless $(n,p)=(2,2)$. In the nonexceptional cases there is no nonzero de Rham cohomology in the weights occurring in $A^*\lambda-\lambda$, so $\Phi_0(A)=0$.

Suppose now that $n=p=2$. Decompose the closed one-form $A^*\lambda-\lambda$ into its weight-one and weight-two parts,
\[
A^*\lambda-\lambda=\theta_1+\theta_2.
\]
Since $d$ preserves weight, both $\theta_1$ and $\theta_2$ are closed. The group $H^1_{\mathrm{dR}}(R)$ has no nonzero weight-one component, so $[\theta_1]=0$. Its weight-two component is
\[
W_0=\spanop_k\{[x_2dx_2],[x_1dx_1]\},
\]
and therefore $\Phi_0(A)\in W_0$.

Cartier sends these basis classes to
\[
C([x_2dx_2])=dx_2,
\qquad
C([x_1dx_1])=dx_1,
\]
with inverse Frobenius applied to their coefficients. Both constant one-forms are exact. Hence $\mathcal T(\Phi_0(A))=0$.
\end{proof}

\begin{remark}[The exceptional flux can be nonzero]
When $n=p=2$, take
\[
A=E_1(x_2)=(x_1+x_2,x_2).
\]
Then
\[
A^*\lambda-\lambda=x_2dx_2,
\]
and $[x_2dx_2]\ne0$ by \cref{lem:derham-basis}. Thus the exceptional clause in \cref{lem:affine-flux} cannot be replaced by affine-flux vanishing.
\end{remark}

\begin{lemma}[Elementary flux]
\label{lem:elementary-flux}
Let $E_j(f)$ be elementary, with $f$ independent of $x_j$. Then
\begin{equation}
\Phi_0(E_j(f))=[f\omega_j].
\label{eq:elementary-flux}
\end{equation}
\end{lemma}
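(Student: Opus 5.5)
The identity \eqref{eq:elementary-flux} is a direct computation of $E_j(f)^*\lambda-\lambda$ followed by one explicit exactness check. First observe that $E_j(f)\in\operatorname{STA}_n(k)$, since $JE_j(f)$ is unipotent; so $\Phi_0(E_j(f))$ is defined. Write $F=E_j(f)$. Then $F^*x_i=x_i$ for $i\ne j$ and $F^*x_j=x_j+f$, and $F^*$ commutes with $d$ and $\wedge$. I will split into the cases $j=1$ and $j\ne1$, since $\lambda=x_1\omega_1$ treats the first coordinate asymmetrically.

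\emph{Case $j=1$.} The form $\omega_1=dx_2\wedge\cdots\wedge dx_n$ is fixed by $F^*$, so $F^*\lambda=(x_1+f)\omega_1$. Hence $F^*\lambda-\lambda=f\omega_1$ on the nose, and there is nothing more to do.

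\emph{Case $j\ne1$.} Now $x_1$ is fixed and only the factor $dx_j$ of $\omega_1$ changes, to $dx_j+df$. By multilinearity,
\[
F^*\lambda-\lambda=x_1\,dx_2\wedge\cdots\wedge df\wedge\cdots\wedge dx_n,
\]
with $df$ in the slot of $dx_j$. Expand $df=\sum_{i\ne j}\partial_if\,dx_i$; there is no $dx_j$ term because $f$ is independent of $x_j$. Every term with $i\ne1$ repeats a differential already present and so vanishes, leaving only $x_1\partial_1f$ times the form obtained by putting $dx_1$ in slot $j$. Moving $dx_1$ to the front costs $j-2$ transpositions. Comparing with $\omega_j=(-1)^{j-1}dx_1\wedge\cdots\wedge\widehat{dx_j}\wedge\cdots\wedge dx_n$ gives
\[
F^*\lambda-\lambda=-x_1\partial_1f\,\omega_j .
\]

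The remaining step is the only one requiring an idea: this form differs from $f\omega_j$, but only by an exact form. Their difference is
\[
(f+x_1\partial_1f)\,\omega_j=\partial_1(x_1f)\,\omega_j.
\]
Put
\[
\eta=(-1)^{j-1}x_1f\,dx_2\wedge\cdots\wedge\widehat{dx_j}\wedge\cdots\wedge dx_n .
\]
In $d\eta$, the terms $\partial_i(x_1f)\,dx_i$ with $i\ne1,j$ are killed by repetition, and the $i=j$ term vanishes since $\partial_j(x_1f)=0$. Thus
\[
d\eta=(-1)^{j-1}\partial_1(x_1f)\,dx_1\wedge\cdots\wedge\widehat{dx_j}\wedge\cdots\wedge dx_n=\partial_1(x_1f)\,\omega_j .
\]
Hence $[F^*\lambda-\lambda]=[f\omega_j]$ in $H^{n-1}_{\mathrm{dR}}(R)$. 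I expect the only real hazard to be sign bookkeeping, both in moving $dx_1$ into position and in normalizing $\eta$; I would verify it against $n=2$, $j=2$, where $F^*\lambda-\lambda=x_1\partial_1f\,dx_1$, $\omega_2=-dx_1$, and the difference $(f+x_1\partial_1f)\,\omega_2=-d(x_1f)$ is exact.
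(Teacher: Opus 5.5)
Your proof is correct and follows essentially the same route as the paper: the case $j=1$ is immediate, and for $j\ne1$ the paper likewise writes $E_j(f)^*\lambda-\lambda=(-1)^{j-2}x_1\,df\wedge\eta_j$ with $\eta_j=dx_2\wedge\cdots\wedge\widehat{dx_j}\wedge\cdots\wedge dx_n$, and corrects by the exact form $d(x_1f\eta_j)=f\,dx_1\wedge\eta_j+x_1\,df\wedge\eta_j$, which is your primitive $\eta$ up to sign. The only cosmetic difference is that you expand $df$ to $\partial_1f\,dx_1$ before comparing with $\omega_j$, whereas the paper keeps $df\wedge\eta_j$ intact; your signs are right.
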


\begin{proof}
For $j=1$,
\[
E_1(f)^*\lambda-\lambda=f\omega_1.
\]
Assume $j\ge2$, and put
\[
\eta_j=dx_2\wedge\cdots\wedge\widehat{dx_j}\wedge\cdots\wedge dx_n.
\]
The form $df$ occurs in the $(j-1)$st position of $dx_2\wedge\cdots\wedge dx_n$, so
\[
E_j(f)^*\lambda-\lambda
=(-1)^{j-2}x_1df\wedge\eta_j.
\]
Moreover,
\[
d(x_1f\eta_j)=f\,dx_1\wedge\eta_j+x_1df\wedge\eta_j.
\]
Hence
\[
[E_j(f)^*\lambda-\lambda]
=(-1)^{j-1}[f\,dx_1\wedge\eta_j].
\]
By the definition of $\omega_j$,
\[
dx_1\wedge\eta_j=(-1)^{j-1}\omega_j.
\]
The two signs cancel, giving \eqref{eq:elementary-flux}.
\end{proof}

For
\[
f=\sum_{\alpha_j=0}c_\alpha x^\alpha
\in k[x_1,\ldots,\widehat{x_j},\ldots,x_n],
\]
define the $j$th Cartier residue by
\begin{equation}
\mathcal R_j(f)=
\sum_{\substack{\alpha_j=0\\
\alpha_i\equiv p-1\ (\mathrm{mod}\ p),\ i\ne j}}
c_\alpha^{1/p}
\prod_{i\ne j}x_i^{(\alpha_i-(p-1))/p}.
\end{equation}

\begin{lemma}[Elementary Cartier descent]
\label{lem:elementary-Cartier}
For $f$ independent of $x_j$,
\begin{equation}
C([f\omega_j])=\mathcal R_j(f)\omega_j.
\label{eq:elementary-Cartier}
\end{equation}
The form on the right is closed. Repeated application of $\mathcal T$ to $[f\omega_j]$ is defined and eventually gives zero.
\end{lemma}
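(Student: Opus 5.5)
The plan is to compute the class $[f\omega_j]$ in the decomposition of \cref{lem:derham-basis}, apply \eqref{eq:Cartier-degree-n-minus-one} termwise, and then check closedness and termination directly.

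\emph{Step 1: reduce to the residue monomials.} By linearity I may take $f=c\,x^\alpha$ with $\alpha_j=0$. Put $\beta_i=\alpha_i$ for $i\ne j$ and $\beta_j=0$. Two cases arise.

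\begin{itemize}
\item If some $i\ne j$ has $\alpha_i\not\equiv p-1\pmod p$, the form $x^\alpha\omega_j$ is exact. Indeed,
\[
d\bigl(x^{\alpha+e_i}\,\iota\bigr)=(\alpha_i+1)\,x^\alpha\,dx_i\wedge\iota
\]
for a suitable $(n-2)$-form $\iota$ built from the $dx_l$ with $l\ne i,j$, chosen (with its sign) so that $dx_i\wedge\iota=\omega_j$. Since $\alpha_i+1\not\equiv0$, no term of $x^\alpha$ involves $x_j$, and the remaining factors of $\iota$ kill the other derivative terms, this exhibits $x^\alpha\omega_j$ as exact. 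Equivalently, $x^\alpha\omega_j$ is not among the basis representatives of \cref{lem:derham-basis}. So $[f\omega_j]=0$ and the monomial contributes $0$, matching $\mathcal R_j$.
\item If $\alpha_i=(p-1)+p\gamma_i$ for all $i\ne j$, then $c\,x^\alpha\omega_j=(b\,x^\gamma)^p\rho_j$ with $b=c^{1/p}$. By \eqref{eq:Cartier-degree-n-minus-one} (or \eqref{eq:Cartier-inverse} together with \cref{lem:Cartier-semilinearity}),
\[
C\bigl([c\,x^\alpha\omega_j]\bigr)=c^{1/p}x^\gamma\omega_j .
\]
\end{itemize}

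Summing over monomials gives \eqref{eq:elementary-Cartier}. The one point that needs care is that $C$ is applied to a class, so I must use that $C$ is well defined on cohomology, and that the exact monomials of the first case are genuinely zero in $H^{n-1}_{\mathrm{dR}}(R)$. The direct exactness computation above handles this.

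\emph{Step 2: closedness.} Since $\mathcal R_j(f)$ does not involve $x_j$,
\[
d\bigl(\mathcal R_j(f)\omega_j\bigr)=\partial_j\mathcal R_j(f)\,dx_j\wedge\omega_j=0 .
\]
Hence $\mathcal T([f\omega_j])=[\mathcal R_j(f)\omega_j]$ is defined. This class has the same shape as the original one, with $\mathcal R_j(f)$ again independent of $x_j$, so by induction
\[
\mathcal T^m([f\omega_j])=[\mathcal R_j^m(f)\omega_j]
\]
is defined for all $m$.

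\emph{Step 3: termination.} The map $\mathcal R_j$ sends a monomial of total degree $D$ either to $0$ or to a monomial of degree
\[
\frac{D-(n-1)(p-1)}{p}\le\frac{D}{p}<D
\]
when $D>0$. A constant term contributes $0$ unless $n-1=0$, which is excluded since $n\ge2$, because a constant has all exponents $0\not\equiv p-1$. So the maximal degree strictly decreases until the polynomial vanishes. After at most $\deg f+1$ steps, $\mathcal R_j^m(f)=0$, and the iterated class is $0$.

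I expect the main obstacle to be only the bookkeeping in the first case of Step 1: the exactness of the non-residue monomials, and keeping the sign conventions of $\omega_j$ consistent.
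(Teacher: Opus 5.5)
Your proof is correct and follows essentially the same route as the paper: you identify $[f\omega_j]$ with $[\mathcal R_j(f)^p\rho_j]$, apply \eqref{eq:Cartier-degree-n-minus-one}, get closedness because $\mathcal R_j(f)$ does not involve $x_j$, and terminate via the degree drop $D\mapsto (D-(n-1)(p-1))/p$. The differences are only cosmetic: you prove exactness of the non-residue monomials with an explicit primitive instead of citing \cref{lem:derham-basis}, and you end the iteration by noting that $\mathcal R_j$ kills constants, whereas the paper notes that a constant multiple of $\omega_j$ is exact.
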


\begin{proof}
By \cref{lem:derham-basis}, the only monomials of $f\omega_j$ that survive in cohomology are those whose exponent in every $x_i$, $i\ne j$, is congruent to $p-1$ modulo $p$. Thus
\[
[f\omega_j]=[\mathcal R_j(f)^p\rho_j].
\]
Equation \eqref{eq:Cartier-degree-n-minus-one} gives \eqref{eq:elementary-Cartier}.

The polynomial $\mathcal R_j(f)$ is independent of $x_j$, so
\[
d(\mathcal R_j(f)\omega_j)
=
\frac{\partial\mathcal R_j(f)}{\partial x_j}
 dx_j\wedge\omega_j
=0.
\]
Thus another Cartier step is defined and has the same form.

Suppose $\mathcal R_j(f)\ne0$, and put $d=\deg_{\mathrm{tot}}f$. Every surviving monomial has original degree
\[
p\,\deg(\text{residue monomial})+(n-1)(p-1).
\]
Consequently,
\[
\deg\mathcal R_j(f)
\le
\frac{d-(n-1)(p-1)}p
<d.
\]
The total degree therefore decreases at every nonzero step. If an iterate of the residue polynomial is constant, the corresponding form is a constant multiple of $\omega_j$, whose de Rham class is zero. Indeed, choose $i\ne j$ and a constant $(n-2)$-form $\eta$ satisfying $dx_i\wedge\eta=\omega_j$; then
\[
\omega_j=d(x_i\eta).
\]
Hence the iteration terminates.
\end{proof}

\subsection{Admissibility under tame composition}

\begin{definition}[Cartier-admissibility]
An element $F\in\operatorname{STA}_n(k)$ is \emph{Cartier-admissible} if the sequence
\[
\Phi_{r+1}(F)=\mathcal T(\Phi_r(F)),
\qquad r\ge0,
\]
is defined for every $r$ and is eventually zero.
\end{definition}

\begin{lemma}[Closure of admissibility under products]
\label{lem:admissible-products}
Let $F,G\in\operatorname{STA}_n(k)$ be Cartier-admissible. Then $FG$ is Cartier-admissible, and for every $r\ge0$,
\begin{equation}
\Phi_r(FG)=G^*\Phi_r(F)+\Phi_r(G).
\label{eq:iterated-cocycle}
\end{equation}
If
\[
\Phi_r(F)=0\quad(r>N_F),
\qquad
\Phi_r(G)=0\quad(r>N_G),
\]
then
\[
\Phi_r(FG)=0
\qquad
(r>\max\{N_F,N_G\}).
\]
\end{lemma}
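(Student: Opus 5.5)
Induction on $r$ establishes \eqref{eq:iterated-cocycle} together with the fact that $\Phi_r(FG)$ is defined. The case $r=0$ is \cref{lem:flux-cocycle}. Assume that $\Phi_r(FG)$ is defined and equals $G^*\Phi_r(F)+\Phi_r(G)$. Admissibility of $F$ and $G$ gives closed forms $C(\Phi_r(F))$ and $C(\Phi_r(G))$, with classes $\Phi_{r+1}(F)$ and $\Phi_{r+1}(G)$. Because $C$ is additive (it is the inverse of the additive isomorphism $C^{-1}$), \cref{prop:Cartier-naturality} gives
\[
C(\Phi_r(FG))=C(G^*\Phi_r(F))+C(\Phi_r(G))=G^*C(\Phi_r(F))+C(\Phi_r(G)).
\]
Pullback by the polynomial map $G$ commutes with $d$, so $G^*C(\Phi_r(F))$ is closed. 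The right-hand side is therefore closed, and $\mathcal T(\Phi_r(FG))$ is defined. Taking classes, and using that pullback on forms induces pullback on classes, we get
\[
\Phi_{r+1}(FG)=G^*[C(\Phi_r(F))]+[C(\Phi_r(G))]=G^*\Phi_{r+1}(F)+\Phi_{r+1}(G).
\]
This completes the induction step.

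The vanishing statement then follows from the identity. For $r>\max\{N_F,N_G\}$ both $\Phi_r(F)$ and $\Phi_r(G)$ are zero, and hence so is $\Phi_r(FG)$. Since the sequence $\Phi_r(FG)$ is defined for all $r$ and eventually zero, $FG$ is Cartier-admissible. The existence of bounds $N_F$ and $N_G$ is exactly the eventual-vanishing part of admissibility for $F$ and $G$.

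The only delicate step is applying naturality in the induction. \Cref{prop:Cartier-naturality} is stated for de Rham classes, whereas the closedness argument needs it on the specific form $C(\Phi_r(F))$. The point to check is that $C(G^*\alpha)$ is literally the form $G^*C(\alpha)$, not merely cohomologous to it. This holds because $C$ takes values in forms and the proposition is an equality in $\Omega_R^{n-1}$. With that in hand, closedness of $C(\Phi_r(FG))$ is immediate.
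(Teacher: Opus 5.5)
Your proposal is correct and follows the paper's proof essentially step for step. The paper also runs a simultaneous induction on $r$ from \cref{lem:flux-cocycle}, uses additivity of $C$, \cref{prop:Cartier-naturality}, and the closedness of pullbacks for the inductive step, and reads termination off the iterated cocycle identity; your added point that naturality holds as an equality of forms, not just of classes, is implicit there and is exactly what the closedness step needs.
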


\begin{proof}
We prove simultaneously, by induction on $r$, that $\Phi_r(FG)$ is defined and that \eqref{eq:iterated-cocycle} holds.

For $r=0$, this is \cref{lem:flux-cocycle}. Assume the assertion at level $r$. Applying Cartier, using additivity and \cref{prop:Cartier-naturality}, gives
\[
\begin{aligned}
C(\Phi_r(FG))
&=C\bigl(G^*\Phi_r(F)+\Phi_r(G)\bigr)\\
&=G^*C(\Phi_r(F))+C(\Phi_r(G)).
\end{aligned}
\]
Since $F$ and $G$ are admissible, the two Cartier images on the right are closed. Pullback preserves closed forms, so their sum is closed. Hence $\Phi_{r+1}(FG)$ is defined. Taking de Rham classes yields
\[
\Phi_{r+1}(FG)
=G^*\Phi_{r+1}(F)+\Phi_{r+1}(G).
\]

If $r>\max\{N_F,N_G\}$, both terms on the right of \eqref{eq:iterated-cocycle} vanish. This proves the termination assertion.
\end{proof}

\begin{theorem}[Tame Cartier-admissibility]
\label{thm:tame-admissibility}
Every element of $\operatorname{STA}_n(k)$ is Cartier-admissible. For every $F,G\in\operatorname{STA}_n(k)$ and every $r\ge0$,
\[
\Phi_r(FG)=G^*\Phi_r(F)+\Phi_r(G).
\]
\end{theorem}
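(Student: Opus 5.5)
The argument is a generation argument. By \cref{prop:special-generation}, $\operatorname{STA}_n(k)$ is generated by $\operatorname{SAff}_n(k)$ and the elementary automorphisms. \Cref{lem:admissible-products} shows that the set of Cartier-admissible elements is closed under products, and on that set the iterated cocycle identity holds. So it suffices to check two things: each generator is Cartier-admissible, and inverses of generators are admissible (or the generating set is closed under inverses). The generating set is in fact closed under inverses: $E_j(f)^{-1}=E_j(-f)$ is elementary, and $\operatorname{SAff}_n(k)$ is a group. Hence every element of $\operatorname{STA}_n(k)$ is a finite product of admissible generators, and induction on word length with \cref{lem:admissible-products} gives admissibility of every element. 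The cocycle identity for arbitrary $F,G$ then follows by applying \cref{lem:admissible-products} once more, now that both factors are known to be admissible.

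For the elementary generators, \cref{lem:elementary-flux} gives $\Phi_0(E_j(f))=[f\omega_j]$. \Cref{lem:elementary-Cartier} then shows that every iterate $\Phi_r$ is defined, equal to $[\mathcal R_j^{r}(f)\omega_j]$-type classes, and eventually zero. So elementary generators are admissible.

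For affine generators with $(n,p)\ne(2,2)$, \cref{lem:affine-flux}(i) gives $\Phi_0(A)=0$. Then $C(0)=0$ is closed, and all iterates vanish.

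The exceptional plane $(n,p)=(2,2)$ is the only point needing care, and I expect it to be the main (though mild) obstacle. Here \cref{lem:affine-flux}(ii) gives $\Phi_0(A)\in W_0$. Its Cartier image is a linear combination of $dx_1,dx_2$ with constant coefficients (inverse Frobenius applied to them), by \cref{lem:Cartier-semilinearity} and \eqref{eq:Cartier-degree-n-minus-one}. Such a form is closed and exact, so $\Phi_1(A)=\mathcal T(\Phi_0(A))=0$. From then on $\Phi_r(A)=0$ for all $r\ge1$, since $C(0)=0$ is closed. Thus $A$ is admissible with $N_A=0$.

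One subtlety is the well-definedness of $\Phi_r$ on group elements rather than on words. This causes no difficulty: $\Phi_0(F)$ is defined intrinsically from $F$, and each subsequent $\Phi_{r+1}=\mathcal T(\Phi_r)$ is determined by the class $\Phi_r(F)$ alone. The induction over a chosen word therefore only establishes that the sequence is defined and eventually zero, which is a property of $F$ itself.
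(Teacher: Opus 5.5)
Your proposal is correct and follows essentially the same route as the paper. Both arguments check admissibility on the special affine generators (\cref{lem:affine-flux}) and the elementary generators (\cref{lem:elementary-flux,lem:elementary-Cartier}), use closure of the generating set under inverses, and propagate admissibility and the iterated cocycle identity through finite tame words via \cref{prop:special-generation} and \cref{lem:admissible-products}. Your explicit handling of the exceptional plane $(n,p)=(2,2)$, and your remark that admissibility is intrinsic to $F$ rather than to the chosen word, make explicit what the paper's short proof leaves implicit.
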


\begin{proof}
Special affine automorphisms are Cartier-admissible by \cref{lem:affine-flux}, and their inverses are again special affine. Elementary automorphisms are Cartier-admissible by \cref{lem:elementary-flux,lem:elementary-Cartier}; moreover,
\[
E_j(f)^{-1}=E_j(-f).
\]
By \cref{prop:special-generation}, every element of $\operatorname{STA}_n(k)$ is a finite product of special affine and elementary automorphisms and their inverses. Repeated application of \cref{lem:admissible-products} proves both admissibility and the iterated crossed-cocycle identity.
\end{proof}

The tame hypothesis enters precisely through \cref{prop:special-generation}: admissibility is verified on the special affine and elementary generators and then propagated through finite tame words.

\section{Lowest-weight Cartier characters and finite jets}
\label{sec:characters}

The maps $\Phi_r$ are crossed homomorphisms. On $U_n(k)$, their pullback term disappears after projection to the lowest nonzero weight.

For $d\ge0$, define the split descending weight filtrations
\[
\mathcal F^{\ge d}\Omega_R^\bullet
=
\bigoplus_{e\ge d}\Omega_R^\bullet[e],
\qquad
\mathcal F^{\ge d}H^m_{\mathrm{dR}}(R)
=
\bigoplus_{e\ge d}H^m(\Omega_R^\bullet[e]).
\]
Their associated graded pieces are the homogeneous weight-$d$ summands.

\begin{lemma}[Tangent pullback on the weight filtration]
\label{lem:tangent-weight}
Let $F\in U_n(k)$. For every $m\ge0$, pullback by $F$ preserves the weight filtration on $H^m_{\mathrm{dR}}(R)$ and induces the identity on its associated graded:
\[
\operatorname{gr}_d(F^*)
=
\id_{\operatorname{gr}_dH^m_{\mathrm{dR}}(R)}
\qquad(d\ge0).
\]
In particular, for $\alpha\in H^{n-1}_{\mathrm{dR}}(R)$,
\begin{equation}
\pr_0(F^*\alpha)=\pr_0(\alpha).
\label{eq:lowest-weight-invariance}
\end{equation}
\end{lemma}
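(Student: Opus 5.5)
Write $F_i=x_i+g_i$ with every monomial of $g_i$ of polynomial degree at least two; this is exactly the condition $F(0)=0$, $JF(0)=I_n$. The plan is to work first at the level of forms and then pass to cohomology. The approach is to show that $F^*$ preserves $\mathcal F^{\ge d}\Omega_R^m$ and acts as the identity modulo $\mathcal F^{\ge d+1}\Omega_R^m$.

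For the form-level statement, note that pullback is a ring homomorphism compatible with $d$, so it suffices to check the generators.
\begin{itemize}
\item On functions, $F^*x_i=x_i+g_i$ with $g_i\in\mathcal F^{\ge 2}$.
\item On differentials, $F^*dx_i=dx_i+dg_i$, and $dg_i$ is a sum of terms $\partial g_i/\partial x_l\,dx_l$ whose coefficients have degree at least one, so $dg_i\in\mathcal F^{\ge2}\Omega^1$.
\end{itemize}
Hence for a homogeneous monomial form $\eta=x^\alpha dx_I$ of weight $d$, expanding the product $\prod(x_i+g_i)^{\alpha_i}\bigwedge(dx_i+dg_i)$ gives $\eta$ plus terms of weight at least $d+1$. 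This uses only that the weight filtration is multiplicative: weights add under products and wedge products. It follows that $F^*(\mathcal F^{\ge d}\Omega^m)\subseteq\mathcal F^{\ge d}\Omega^m$ and $F^*\eta-\eta\in\mathcal F^{\ge d+1}\Omega^m$.

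Passing to cohomology is the only delicate step. Because the weight decomposition of the complex is a direct sum of subcomplexes, $H^m$ splits by weight, and $\mathcal F^{\ge d}H^m$ is the image of the closed forms in $\mathcal F^{\ge d}\Omega^m$. To see this, take a class in $\mathcal F^{\ge d}H^m$. Its representative can be chosen as a closed form whose homogeneous components of weight below $d$ are exact, and we subtract those components. Because $d$ preserves weight, each homogeneous component of a closed form is itself closed.

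Now let $\theta$ be a closed form in $\mathcal F^{\ge d}\Omega^m$.
\begin{itemize}
\item $F^*\theta$ is closed, since $F^*$ commutes with $d$.
\item $F^*\theta$ lies in $\mathcal F^{\ge d}$, by the form-level statement.
\item $F^*\theta-\theta$ is closed and lies in $\mathcal F^{\ge d+1}\Omega^m$.
\end{itemize}
By the description of $\mathcal F^{\ge d+1}H^m$ just given, its class lies in $\mathcal F^{\ge d+1}H^m$. This proves both the preservation of the filtration and $\operatorname{gr}_d(F^*)=\id$.

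For \eqref{eq:lowest-weight-invariance}, take $\alpha\in H^{n-1}_{\mathrm{dR}}(R)$ and write $\alpha=\alpha_0+\alpha'$. Here $\alpha_0\in W_0$ is the weight-$w_0$ part, which equals $\pr_0(\alpha)$. The remainder $\alpha'$ lies in $\mathcal F^{\ge w_0+1}$, since all lower weights of $H^{n-1}$ vanish; recall that $w_0$ is the lowest nonzero weight and all other weights are at least $w_0+p$. Preservation of the filtration gives $F^*\alpha'\in\mathcal F^{\ge w_0+1}$, so $\pr_0(F^*\alpha')=0$. The graded identity gives $F^*\alpha_0-\alpha_0\in\mathcal F^{\ge w_0+1}$, so $\pr_0(F^*\alpha_0)=\alpha_0$. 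Adding the two gives $\pr_0(F^*\alpha)=\pr_0(\alpha)$.

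The main obstacle is the filtration bookkeeping in cohomology, namely checking that a closed form of high weight defines a class of high weight. The splitting of the complex into weight subcomplexes handles this directly.
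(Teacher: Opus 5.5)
Your proof is correct and follows essentially the same route as the paper. Both write $F_i=x_i+g_i$ with $g_i\in\mathfrak m^2$, show that $F^*\theta-\theta$ has weight at least $d+1$ for a weight-$d$ monomial form, and pass to cohomology using the splitting of the de Rham complex into weight subcomplexes; your identification of $\mathcal F^{\ge d}H^m$ with classes of closed forms in $\mathcal F^{\ge d}\Omega^m$, and the decomposition $\alpha=\alpha_0+\alpha'$, just spell out steps the paper states more briefly.
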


\begin{proof}
Let
\[
\mathfrak m=(x_1,\ldots,x_n).
\]
Since $F(0)=0$ and $JF(0)=I_n$, one may write
\[
F_i=x_i+h_i,
\qquad
h_i\in\mathfrak m^2.
\]
Consequently,
\[
dF_i=dx_i+dh_i.
\]

Let $\theta=x^\alpha dx_I$ be a homogeneous monomial form of weight $d$. In the expansion of $F^*\theta$, choosing $x_i$ from every occurrence of $F_i$ and $dx_i$ from every occurrence of $dF_i$ produces exactly $\theta$.

Every other nonzero term contains a homogeneous component of some $h_i$ or $dh_i$. If that component comes from a homogeneous polynomial of degree $e\ge2$, then both the polynomial component and its differential have total weight $e$. It replaces a weight-one factor, so the total weight increases by at least one. Therefore
\[
F^*\theta-\theta
\in
\bigoplus_{e\ge d+1}\Omega_R^\bullet[e].
\]

Thus $F^*$ preserves the weight filtration and induces the identity on the associated graded de Rham complex. Since
\[
\Omega_R^\bullet
=
\bigoplus_{d\ge0}\Omega_R^\bullet[d]
\]
is a direct sum of subcomplexes, one has
\[
H^m_{\mathrm{dR}}(R)
=
\bigoplus_{d\ge0}H^m(\Omega_R^\bullet[d]).
\]
Hence passage to cohomology preserves the split weight grading, and $F^*$ induces the identity on its associated graded. The space $W_0$ is the lowest nonzero weight component of $H^{n-1}_{\mathrm{dR}}(R)$, so \eqref{eq:lowest-weight-invariance} follows.
\end{proof}

For $1\le j\le n$, let
\[
\ell_j:W_0\longrightarrow k
\]
be determined by
\[
\ell_j([\rho_i])=\delta_{ij}.
\]
For $r\ge0$, define
\begin{equation}
\chi_{r,j}:U_n(k)\longrightarrow(k,+),
\qquad
\chi_{r,j}(F)=
\ell_j\bigl(\pr_0\Phi_r(F)\bigr).
\end{equation}

\begin{theorem}[Iterated lowest-weight Cartier characters]
\label{thm:characters}
Each $\chi_{r,j}$ is a group homomorphism. For every fixed $F\in U_n(k)$, one has
\[
\chi_{r,j}(F)=0
\]
for every $j$ and all sufficiently large $r$.
\end{theorem}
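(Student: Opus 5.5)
The plan is to combine the iterated crossed-cocycle identity of \cref{thm:tame-admissibility} with the lowest-weight invariance of \cref{lem:tangent-weight}. First we note that $U_n(k)\subseteq\operatorname{STA}_n(k)$, since the constant Jacobian determinant lemma gives $\det JF=1$ for $F\in U_n(k)$. Hence every $F\in U_n(k)$ is Cartier-admissible, and each $\Phi_r(F)\in H^{n-1}_{\mathrm{dR}}(R)$ is defined for all $r\ge0$. The maps $\chi_{r,j}$ are therefore well defined on $U_n(k)$.

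For the homomorphism property, take $F,G\in U_n(k)$. By \cref{thm:tame-admissibility},
\[
\Phi_r(FG)=G^*\Phi_r(F)+\Phi_r(G).
\]
The projection $\pr_0$ is additive, because it is the projection onto a summand of the direct-sum weight decomposition. Applying $\pr_0$ and using \eqref{eq:lowest-weight-invariance} for $G\in U_n(k)$ gives
\[
\pr_0\Phi_r(FG)=\pr_0\Phi_r(F)+\pr_0\Phi_r(G).
\]
Composing with the $k$-linear functional $\ell_j$ shows that $\chi_{r,j}(FG)=\chi_{r,j}(F)+\chi_{r,j}(G)$. The only point to check is that \cref{lem:tangent-weight} applies to $G^*$ acting on the class $\Phi_r(F)$, which is just some element of $H^{n-1}_{\mathrm{dR}}(R)$. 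This holds because \eqref{eq:lowest-weight-invariance} is stated for arbitrary $\alpha$.

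For eventual vanishing, fix $F\in U_n(k)$. Cartier-admissibility says precisely that the sequence $\Phi_r(F)$ is eventually zero. So there is an $N_F$ with $\Phi_r(F)=0$ for all $r>N_F$, and then $\chi_{r,j}(F)=\ell_j(\pr_0 0)=0$ for every $j$. Since there are only finitely many indices $j$, a single bound works for all of them.

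The substance of the argument lies in results already proved: admissibility is propagated through tame words in \cref{lem:admissible-products}, and eventual vanishing on generators comes from \cref{lem:elementary-Cartier} and \cref{lem:affine-flux}. The one step to watch is that the crossed term $G^*\Phi_r(F)$ drops out only after applying $\pr_0$. Here we use that $W_0$ is the \emph{lowest} nonzero weight of $H^{n-1}_{\mathrm{dR}}(R)$. Since $G^*$ preserves the filtration and acts as the identity on the associated graded, $G^*\alpha-\alpha$ lies in $\mathcal F^{\ge w_0+1}$, and its projection to $W_0$ therefore vanishes.
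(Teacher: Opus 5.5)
Your proposal is correct and follows essentially the same route as the paper: apply $\ell_j\circ\pr_0$ to the iterated crossed-cocycle identity of \cref{thm:tame-admissibility}, use the lowest-weight invariance \eqref{eq:lowest-weight-invariance} from \cref{lem:tangent-weight} to drop the pullback $G^*$, and read eventual vanishing directly off Cartier-admissibility. Your explicit check that $U_n(k)\subseteq\operatorname{STA}_n(k)$, via the constant Jacobian determinant lemma, makes precise a point the paper leaves implicit.
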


\begin{proof}
For $F,G\in U_n(k)$, \cref{thm:tame-admissibility,lem:tangent-weight} give
\[
\begin{aligned}
\chi_{r,j}(FG)
&=\ell_j\!\left(
\pr_0\bigl(G^*\Phi_r(F)+\Phi_r(G)\bigr)
\right)\\
&=\ell_j(\pr_0\Phi_r(F))
 +\ell_j(\pr_0\Phi_r(G))\\
&=\chi_{r,j}(F)+\chi_{r,j}(G).
\end{aligned}
\]
Thus $\chi_{r,j}$ is a homomorphism.

By Cartier-admissibility, for every $F$ there is an integer $N$ such that $\Phi_r(F)=0$ for $r>N$. All coordinate characters therefore vanish for $r>N$.
\end{proof}

In the common prime-field range, the vector of zeroth characters is the
restriction of Ma's constant Cartier coordinate. The new subgroup-sensitive
phenomenon begins at the positive levels $r\ge1$; see
\cref{prop:Ma-comparison}. Elementwise finite support is part of the same
iterated-character theorem and makes the joint direct-sum map well defined.

\subsection{Finite-jet dependence}

For $N\ge1$, let
\[
\operatorname{Jet}_N
=
\Aut_k\bigl(\Spec(R/\mathfrak m^{N+1})\bigr),
\]
and let
\[
\jet_N:U_n(k)\longrightarrow\operatorname{Jet}_N
\]
be the restriction homomorphism. Since $k$ and $R/\mathfrak m^{N+1}$ are finite, $\operatorname{Jet}_N$ and the image of $\jet_N$ are finite.

\begin{proposition}[Finite-jet dependence]
\label{prop:finite-jet}
For $r\ge0$, put
\begin{equation}
N_r=(n-1)(p^{r+1}-1).
\end{equation}
Then, for every $1\le j\le n$,
\[
\ker(\jet_{N_r})\subseteq\ker\chi_{r,j}.
\]
Equivalently, $\chi_{r,j}$ factors uniquely through the finite image of $\jet_{N_r}$.
\end{proposition}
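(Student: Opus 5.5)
The plan is to track the weight grading through the flux and through each Cartier step. Let $F\in\ker(\jet_{N_r})$. Then $F_i=x_i+h_i$ with $h_i\in\mathfrak m^{N_r+1}$ for every $i$.

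\emph{Step 1 (weight of the flux).} First, bound from below the weights occurring in $F^*\lambda-\lambda$. The form $\lambda=x_1\,dx_2\wedge\cdots\wedge dx_n$ is homogeneous of weight $n$. Expand $F^*\lambda=F_1\,dF_2\wedge\cdots\wedge dF_n$ exactly as in the proof of \cref{lem:tangent-weight}. Every term other than $\lambda$ itself replaces a weight-one factor $x_i$ or $dx_i$ by a homogeneous component of $h_i$ or $dh_i$. Such a component has weight at least $N_r+1$. Hence
\[
F^*\lambda-\lambda\in\mathcal F^{\ge n+N_r}\Omega^{n-1}_R,
\qquad\text{so}\qquad
\Phi_0(F)\in\mathcal F^{\ge n+N_r}H^{n-1}_{\mathrm{dR}}(R).
\]
Because the weight decomposition of the de Rham complex is a direct sum of subcomplexes, the class of a closed form lies in the part of cohomology of the weights that form occupies.

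\emph{Step 2 (Cartier divides weight by $p$).} Next, show that $C$ is compatible with the grading in the following sense: $C$ maps the weight-$w$ summand $H^{n-1}(\Omega^\bullet_R[w])$ into $\Omega^{n-1}_R[w/p]$, and this summand is zero unless $p\mid w$. This follows from \eqref{eq:Cartier-inverse}. A monomial $h$ of degree $e$ gives $h\,dx_I$ of weight $e+|I|$, while $h^p\bigl(\prod_{i\in I}x_i^{p-1}\bigr)dx_I$ has weight $pe+p|I|$. So $C^{-1}$ multiplies weights by exactly $p$ on homogeneous pieces, and \cref{lem:derham-basis} shows that every nonzero homogeneous class has weight divisible by $p$.

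By \cref{thm:tame-admissibility}, $C(\Phi_s(F))$ is closed. Since $d$ preserves weight, each homogeneous component of $C(\Phi_s(F))$ is also closed. Therefore $\Phi_{s+1}(F)=[C(\Phi_s(F))]$ lies in $\mathcal F^{\ge w/p}$ whenever $\Phi_s(F)\in\mathcal F^{\ge w}$. By induction,
\[
\Phi_r(F)\in\mathcal F^{\ge (n+N_r)/p^r}H^{n-1}_{\mathrm{dR}}(R),
\]
where we mean that every nonzero homogeneous component has weight at least $(n+N_r)/p^r$.

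\emph{Step 3 (comparison with $w_0$).} Now compute
\[
n+N_r=(n-1)p^{r+1}+1>p^{r+1}(n-1).
\]
Dividing by $p^r$ shows that every weight present in $\Phi_r(F)$ is strictly larger than $p(n-1)=w_0$. Hence $\pr_0\Phi_r(F)=0$, and so $\chi_{r,j}(F)=0$ for every $j$. Since $\chi_{r,j}$ is a homomorphism (\cref{thm:characters}) and its kernel contains $\ker(\jet_{N_r})$, it factors uniquely through the finite group $\jet_{N_r}(U_n(k))$.

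\emph{Main obstacle.} The delicate point is Step 2. The operator $C$ is defined on cohomology classes, not on forms, so one must make sure that the weight bound survives passage through classes and representatives. I would handle this by working summand by summand in the direct sum $H^{n-1}_{\mathrm{dR}}(R)=\bigoplus_w H^{n-1}(\Omega_R^\bullet[w])$. I would also check that the closed representative $C(\Phi_s(F))$ can be taken homogeneous componentwise. This holds because $C$ is additive and sends each weight summand to a single weight. The strict inequality in Step 3 then absorbs any rounding issues.
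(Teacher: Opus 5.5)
Your argument is correct and is essentially the paper's proof: both rely on $C$ sending homogeneous classes of weight $pD$ to forms of weight $D$, so that $\pr_0\Phi_r(F)$ is governed by the weight-$p^{r+1}(n-1)$ part of $F^*\lambda-\lambda$. The paper phrases the last step as that component involving only coordinate components of degree at most $p^{r+1}(n-1)-(n-1)=N_r$, while you show directly that for $F\in\ker(\jet_{N_r})$ every weight of $F^*\lambda-\lambda$ is at least $n+N_r=(n-1)p^{r+1}+1$, which is the same inequality read from the other side.
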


\begin{proof}
The weight decomposition of the de Rham complex is a direct sum of subcomplexes. Hence closedness, exactness, and cohomology all decompose weight by weight.

If $h$ is homogeneous and $h\omega_j$ has weight $D$, then
\[
C^{-1}(h\omega_j)=[h^p\rho_j]
\]
has weight $pD$. Thus Cartier sends a homogeneous class of weight $pD$ to a homogeneous form of weight $D$. Whenever that form is closed, passage to its de Rham class preserves its weight.

Since the lowest nonzero weight in $H^{n-1}_{\mathrm{dR}}(R)$ is
\[
w_0=p(n-1),
\]
the $W_0$-component of $\Phi_r(F)$ can arise only from the weight
\[
D_r=p^rw_0=p^{r+1}(n-1)
\]
component of $\Phi_0(F)$.

Now
\[
F^*\lambda
=F_1\,dF_2\wedge\cdots\wedge dF_n.
\]
Because $F(0)=0$, every homogeneous coordinate component selected from any $F_i$ has degree at least one. A homogeneous component of $dF_i$ arising from a coordinate component of degree $d_i$ has total weight $d_i$. Therefore a nonzero homogeneous term of $F^*\lambda$ obtained from coordinate degrees $d_1,\ldots,d_n$ has weight
\[
d_1+\cdots+d_n.
\]

If this weight is $D_r$, then $d_i\ge1$ for all $i$, and hence
\[
d_i
\le D_r-(n-1)
=(n-1)(p^{r+1}-1)
=N_r.
\]
It follows that the weight-$D_r$ component of $F^*\lambda-\lambda$ depends only on the $N_r$-jet of $F$. The same is therefore true of $\pr_0\Phi_r(F)$ and of every $\chi_{r,j}(F)$.

If $F\in\ker(\jet_{N_r})$, then $F$ and the identity have the same $N_r$-jet. Consequently,
\[
\chi_{r,j}(F)=\chi_{r,j}(\id)=0.
\]
The asserted factorization follows.
\end{proof}

\begin{remark}[The zeroth layer in characteristic two]
If $(n,p)=(2,2)$, then
\[
N_0=(2-1)(2-1)=1.
\]
Every element of $U_2(k)$ has the same first jet as the identity. By \cref{prop:finite-jet},
\[
\chi_{0,j}=0
\qquad(j=1,2).
\]
Moreover, the formal test shear with $s=0$ introduced below would have a nonzero linear term and would not belong to $U_2(k)$. This accounts for the definition
\[
S_{2,2}=\N_{\ge1}.
\]
\end{remark}

\section{Test shears, Cartier quotients, and finite generation}
\label{sec:quotient}

For $m\ge1$ and $1\le j\le n$, put
\begin{equation}
\rho_j^{(m)}=
\left(\prod_{i\ne j}x_i^{p^m-1}\right)\omega_j.
\end{equation}
Thus $\rho_j^{(1)}=\rho_j$, and \eqref{eq:Cartier-degree-n-minus-one} gives
\begin{equation}
C([\rho_j^{(m)}])=\rho_j^{(m-1)}\quad(m\ge2),
\qquad
C([\rho_j])=\omega_j.
\label{eq:rho-Cartier-chain}
\end{equation}
The form $\omega_j$ is exact. Indeed, choose $i\ne j$ and a constant $(n-2)$-form $\eta$ with $dx_i\wedge\eta=\omega_j$; then
\[
\omega_j=d(x_i\eta).
\]

For $s\in S_{n,p}$, $a\in k$, and $1\le j\le n$, define
\begin{equation}
\label{eq:test-shears}
v_{s,j}(a)=
E_j\!\left(
 a^{p^s}\prod_{i\ne j}x_i^{p^{s+1}-1}
\right).
\end{equation}
The one-coordinate inverse-Cartier monomial pattern is shared with Ma's
formal representative construction \cite[Lemma~3.6]{MaCartierFrattini2026}.
The contribution used here is the Frobenius-normalized infinite family
\eqref{eq:test-shears} and its simultaneous cross-level delta evaluation.

\begin{lemma}[Membership of the test shears]
For every $s\in S_{n,p}$, $a\in k$, and $1\le j\le n$,
\[
v_{s,j}(a)\in U_n(k).
\]
\end{lemma}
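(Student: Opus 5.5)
The plan is to check the three defining conditions of $U_n(k)$ directly: $v_{s,j}(a)$ must be tame, fix the origin, and have Jacobian $I_n$ at the origin.

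\emph{Tameness.} Put
\[
f=a^{p^s}\prod_{i\ne j}x_i^{p^{s+1}-1}.
\]
This polynomial does not involve $x_j$. Hence $v_{s,j}(a)=E_j(f)$ is an elementary automorphism, and so it lies in $\operatorname{TA}_n(k)$.

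\emph{Origin and Jacobian.} Since $E_j(f)=\id+f\,e_j$, where $e_j$ is the $j$th coordinate vector, both conditions reduce to $f\in\mathfrak m^2$. Indeed, that gives $f(0)=0$ and $\partial f/\partial x_i(0)=0$ for all $i$, hence $v_{s,j}(a)(0)=0$ and $Jv_{s,j}(a)(0)=I_n$. If $a=0$ then $f=0$, so we may assume $a\ne0$. Then $f$ is a nonzero monomial of total degree
\[
(n-1)(p^{s+1}-1),
\]
and it suffices to show that this degree is at least $2$.

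\emph{Degree bound.} Consider the cases allowed by $S_{n,p}$.
\begin{itemize}
\item If $(n,p)\ne(2,2)$, then $s\ge0$. For $n\ge3$ the degree is at least $2(p-1)\ge2$. For $n=2$ and $p\ge3$ it is at least $p-1\ge2$.
\item If $(n,p)=(2,2)$, then $s\ge1$ by definition of $S_{2,2}$, so the degree is $2^{s+1}-1\ge3$.
\end{itemize}

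The only delicate point is this case split. It is exactly where the exclusion of $s=0$ for $(n,p)=(2,2)$ is needed: there the monomial would be linear, as noted in the preceding remark.
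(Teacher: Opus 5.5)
Your proposal is correct and follows essentially the same route as the paper: reduce membership to showing that the added monomial $a^{p^s}\prod_{i\ne j}x_i^{p^{s+1}-1}$ has degree $(n-1)(p^{s+1}-1)\ge2$, which fails only for the excluded case $(n,p,s)=(2,2,0)$. Your explicit case split and your remarks on tameness and on the case $a=0$ just spell out details that the paper leaves implicit.
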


\begin{proof}
The added monomial has degree
\[
(n-1)(p^{s+1}-1).
\]
This degree equals one precisely when $(n,p,s)=(2,2,0)$, which is excluded by the definition of $S_{n,p}$. In every allowed case the added term has degree at least two. Thus $v_{s,j}(a)$ fixes the origin and has identity linear part.
\end{proof}

The first two examples display the residue mechanism and the exceptional index convention.

\begin{example}[The zeroth layer in odd characteristic]
Let $n=2$ and $p=3$. Then
\[
v_{0,1}(a)=E_1(ax_2^2).
\]
Its flux is
\[
\Phi_0(v_{0,1}(a))=a[x_2^2dx_2]=a[\rho_1],
\]
so
\[
\chi_{0,1}(v_{0,1}(a))=a.
\]
\end{example}

\begin{example}[The first nonzero layer when $n=p=2$]
Let $n=p=2$. Then $0\notin S_{2,2}$, while
\[
v_{1,1}(a)=E_1(a^2x_2^3)
\]
belongs to $U_2(k)$. One has
\[
\Phi_0(v_{1,1}(a))=a^2[x_2^3dx_2]
\]
and
\[
\Phi_1(v_{1,1}(a))=a[x_2dx_2].
\]
Hence
\[
\chi_{1,1}(v_{1,1}(a))=a.
\]
\end{example}

\begin{proposition}[Kronecker-delta evaluation]
\label{prop:delta-evaluation}
For $r\ge0$, $s\in S_{n,p}$, $1\le i,j\le n$, and $a\in k$,
\begin{equation}
\chi_{r,i}(v_{s,j}(a))
=
\delta_{rs}\delta_{ij}a.
\end{equation}
\end{proposition}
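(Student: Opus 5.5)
The plan is to compute the whole Cartier sequence $\Phi_r(v_{s,j}(a))$ explicitly and then read off its lowest-weight components.

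First, by \cref{lem:elementary-flux},
\[
\Phi_0(v_{s,j}(a))=a^{p^s}\Bigl[\prod_{i\ne j}x_i^{p^{s+1}-1}\,\omega_j\Bigr]=a^{p^s}[\rho_j^{(s+1)}].
\]
Next I apply \cref{lem:elementary-Cartier} repeatedly, or equivalently \eqref{eq:rho-Cartier-chain} together with coefficient semilinearity (\cref{lem:Cartier-semilinearity}). Each Cartier step lowers the superscript $m$ of $\rho_j^{(m)}$ by one and takes a $p$th root of the coefficient. The intermediate forms $\rho_j^{(m-1)}$ are independent of $x_j$, hence closed, so $\mathcal T$ is defined at every stage. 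By induction on $r$ this gives
\[
\Phi_r(v_{s,j}(a))=a^{p^{s-r}}[\rho_j^{(s+1-r)}]\qquad(0\le r\le s).
\]
In particular $\Phi_s=a[\rho_j^{(1)}]=a[\rho_j]$. The next step gives $C(a[\rho_j])=a^{1/p}\omega_j$, which is exact by the remark after \eqref{eq:rho-Cartier-chain}. Hence $\Phi_{s+1}=0$, and then $\Phi_r=0$ for all $r>s$.

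Now I project to $W_0$. The class $[\rho_j^{(m)}]$ is homogeneous of weight $(n-1)p^m$. This equals $w_0=p(n-1)$ only when $m=1$, and for $m\ge2$ it is a basis class of strictly higher weight, so its $\pr_0$-component vanishes. Therefore:
\begin{itemize}
\item for $r<s$, the class $\Phi_r$ has weight above $w_0$, so $\pr_0\Phi_r=0$;
\item for $r>s$, $\Phi_r=0$;
\item for $r=s$, $\pr_0\Phi_s=a[\rho_j]$, and $\ell_i(a[\rho_j])=\delta_{ij}a$.
\end{itemize}
Together these give $\chi_{r,i}(v_{s,j}(a))=\delta_{rs}\delta_{ij}a$.

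The main obstacle is the bookkeeping in the Cartier chain. I have to check that the Frobenius normalization $a^{p^s}$ combined with the semilinearity of $C$ produces exactly $a$ at level $s$. I also need that membership in $U_n(k)$ (guaranteed by $s\in S_{n,p}$) is the only place the case $(n,p)=(2,2)$ enters. The weight argument needs no special treatment there, since $[\rho_j^{(m)}]$ is nonzero in cohomology by \cref{lem:derham-basis}.
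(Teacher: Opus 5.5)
Your proposal is correct and follows the paper's proof essentially step for step. You compute $\Phi_0(v_{s,j}(a))=a^{p^s}[\rho_j^{(s+1)}]$ from the elementary flux lemma, then descend along the $\rho_j^{(m)}$ Cartier chain with coefficient semilinearity to get $\Phi_r=a^{p^{s-r}}[\rho_j^{(s+1-r)}]$ for $r\le s$ and $\Phi_r=0$ for $r>s$, and finally observe that only $r=s$ lands in the lowest weight $w_0$. One small point: the nonvanishing of $[\rho_j^{(m)}]$ is not actually needed for the weight argument, since a zero class would also project to zero.
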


\begin{proof}
By \cref{lem:elementary-flux},
\[
\Phi_0(v_{s,j}(a))
=a^{p^s}[\rho_j^{(s+1)}].
\]
Using \cref{lem:Cartier-semilinearity} and \eqref{eq:rho-Cartier-chain}, one obtains
\[
\Phi_r(v_{s,j}(a))
=
\begin{cases}
 a^{p^{s-r}}[\rho_j^{(s+1-r)}],&0\le r\le s,\\
 0,&r>s.
\end{cases}
\]

If $r<s$, the class $[\rho_j^{(s+1-r)}]$ has weight
\[
(n-1)p^{s+1-r}>p(n-1)=w_0,
\]
so its projection to $W_0$ is zero. If $r=s$, then
\[
\Phi_s(v_{s,j}(a))=a[\rho_j],
\]
and therefore
\[
\chi_{s,i}(v_{s,j}(a))
=\ell_i(a[\rho_j])
=\delta_{ij}a.
\]
For $r>s$, the iterated class is zero because the next Cartier image is a scalar multiple of the exact form $\omega_j$.
\end{proof}

\begin{corollary}[Optimality of the jet order]
\label{cor:optimal-jet}
Let $r\in S_{n,p}$ and $1\le j\le n$. Then $N_r$ is the least integer $N$ for which $\chi_{r,j}$ factors through $\jet_N$.
\end{corollary}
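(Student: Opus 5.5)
Factorization through $\jet_{N_r}$ is already given by \cref{prop:finite-jet}. It therefore suffices to show that $\chi_{r,j}$ does \emph{not} factor through $\jet_N$ for any $N<N_r$. Since $N\le N'$ implies $\ker(\jet_{N'})\subseteq\ker(\jet_N)$, it is enough to treat $N=N_r-1$. For this $N$ we need one element $F\in\ker(\jet_{N_r-1})$ with $\chi_{r,j}(F)\ne0$. Such an element shows $\ker(\jet_{N_r-1})\not\subseteq\ker\chi_{r,j}$, so $\chi_{r,j}$ cannot be written as a function of the $(N_r-1)$-jet. The same conclusion then holds for every smaller $N$.

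The witness is the test shear $v_{r,j}(1)$, which is available because $r\in S_{n,p}$. By the membership lemma it lies in $U_n(k)$. By \cref{prop:delta-evaluation},
\[
\chi_{r,j}(v_{r,j}(1))=1\ne0.
\]
Its only non-identity term is the monomial $\prod_{i\ne j}x_i^{p^{r+1}-1}$ added to the $j$th coordinate. This monomial is homogeneous of degree
\[
(n-1)(p^{r+1}-1)=N_r,
\]
so it lies in $\mathfrak m^{N_r}\subseteq\mathfrak m^{N}$ for all $N\le N_r$.

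Consequently, the image of $v_{r,j}(1)$ in $\Aut_k(\Spec(R/\mathfrak m^{N_r}))$ is the identity. In other words, $v_{r,j}(1)\in\ker(\jet_{N_r-1})$, which completes the argument.

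Two small points need care. First, $N_r-1\ge1$, so $\jet_{N_r-1}$ is defined in the paper's convention $N\ge1$; this holds because $(n,p,r)\neq(2,2,0)$ gives $N_r\ge2$. Second, "factors through $\jet_N$" should be read as $\ker\jet_N\subseteq\ker\chi_{r,j}$. This is equivalent to factoring through the image, since both maps are homomorphisms. Apart from these checks, the argument is just the delta evaluation combined with a degree count.
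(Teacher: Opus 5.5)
Your proof is correct and follows essentially the same route as the paper. You use the witness $v_{r,j}(1)\in\ker(\jet_{N_r-1})$ with $\chi_{r,j}(v_{r,j}(1))=1$, and you reduce from arbitrary $N<N_r$ to $N=N_r-1$ via the kernel inclusion $\ker\jet_{N'}\subseteq\ker\jet_N$; the paper phrases that reduction through the truncation homomorphism $\tau_{N_r-1,N}$, and your explicit check that $N_r-1\ge1$ is a detail the paper leaves implicit.
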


\begin{proof}
By \cref{prop:finite-jet}, the character factors through the $N_r$-jet. The test element $v_{r,j}(1)$ differs from the identity by a single coordinate term of degree exactly
\[
(n-1)(p^{r+1}-1)=N_r.
\]
Hence
\[
v_{r,j}(1)\in\ker(\jet_{N_r-1}).
\]
On the other hand, \cref{prop:delta-evaluation} gives
\[
\chi_{r,j}(v_{r,j}(1))=1.
\]
Thus $\chi_{r,j}$ does not factor through the $(N_r-1)$-jet. If $N<N_r$, truncation induces a homomorphism
\[
\tau_{N_r-1,N}:\operatorname{im}(\jet_{N_r-1})
\longrightarrow\operatorname{im}(\jet_N)
\]
with
\[
\jet_N=\tau_{N_r-1,N}\circ\jet_{N_r-1}.
\]
Consequently, factorization through any $N$-jet with $N<N_r$ would imply factorization through the $(N_r-1)$-jet, which has just been excluded. Hence $N_r$ is minimal.
\end{proof}

\subsection{Comparison with ambient formal Cartier coordinates}
\label{subsec:Ma-comparison}

The following proposition records the exact common input and the point at
which the polynomial tame-subgroup construction diverges from Ma's ambient
formal Frattini calculation. It is not used in the proof of
\cref{thm:main}; it depends on the character and test-shear results already
proved above.

Assume in this subsection that
\[
k=\Fp,\qquad n\ge3,\qquad p\ge5.
\]
Let $\widehat R=\Fp[[x_1,\ldots,x_n]]$ and
$\widehat{\mathfrak m}=(x_1,\ldots,x_n)$. Let $\widehat G_1$ be the group of
continuous $\Fp$-algebra automorphisms of $\widehat R$ which are tangent to
the identity and preserve
$dx_1\wedge\cdots\wedge dx_n$, and put
\[
\widehat G_M=
\{g\in\widehat G_1:g(x_i)-x_i\in\widehat{\mathfrak m}^{M}
\text{ for every }i\},
\qquad
T_M=\widehat G_1/\widehat G_M.
\]
Polynomial completion maps $U_n(\Fp)$ into $\widehat G_1$; denote its image
in $T_M$ by $U_M$. Notice that our $N$-jet is defined using
$R/\mathfrak m^{N+1}$. Thus $M>N$ guarantees that two elements with the same
image in $T_M$ have the same $N$-jet. This explains the strict cutoff in
part~\textup{(iv)} below.

\begin{proposition}[Comparison with ambient formal Cartier coordinates]
\label{prop:Ma-comparison}
For $r\ge0$, put
\[
q_r=(n-1)(p^r-1),
\qquad
d_t=(p-1)(n-1)+pt.
\]
Then the following statements hold.
\begin{enumerate}[label=\textup{(\roman*)}]
\item One has $N_r=d_{q_r}$.

\item Under the standard Cartier--contraction identification of the
index-zero Cartier quotient with $V^{(1)}$, where $V=\Fp^n$, the vector
character
\[
(\chi_{0,1},\ldots,\chi_{0,n})
\]
agrees with the restriction to $U_n(\Fp)$ of Ma's $Q_0$-coordinate.

\item The test shear $v_{r,j}(a)$ is the one-coordinate inverse-Cartier
representative associated with the multi-index
\[
\alpha_j=0,
\qquad
\alpha_i=p^r-1\quad(i\ne j).
\]

\item Let $r\ge1$ and $M>N_r$. The character induced by $\chi_{r,j}$ on
$U_M$ does not extend to a homomorphism
\[
T_M\longrightarrow(\Fp,+).
\]
\end{enumerate}
\end{proposition}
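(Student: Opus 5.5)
Parts (i) and (iii) are direct bookkeeping, (ii) is an identification of normalizations, and (iv) carries the content: it uses Ma's Frattini theorem as the external input.

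For (i), I will substitute $t=q_r$ into $d_t$:
\[
d_{q_r}=(p-1)(n-1)+p(n-1)(p^r-1)=(n-1)(p^{r+1}-1)=N_r .
\]
For (iii), I will write out Ma's inverse-Cartier representative from \cite[Lemma~3.6]{MaCartierFrattini2026} for the index $\alpha$ with $\alpha_j=0$ and $\alpha_i=p^r-1$ for $i\ne j$. Its exponents are $p\alpha_i+(p-1)=p^{r+1}-1$, and over $\Fp$ the Frobenius twist $a^{p^r}=a$ is trivial. This is exactly the monomial $\prod_{i\ne j}x_i^{p^{r+1}-1}$ in \eqref{eq:test-shears}, placed in the $j$th coordinate. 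By \cref{lem:elementary-flux} its flux is $a[\rho_j^{(r+1)}]$, matching Ma's class.

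For (ii), I will use that both constructions start from the same class $[F^*\lambda-\lambda]$ and take its lowest-weight component, which is weight $p(n-1)$, i.e.\ index $t=0$ and degree $d_0$. Ma's $Q_0$ applies $C$ to this component and contracts the resulting constant $(n-1)$-form $\sum_j c_j\omega_j$ against $\omega$. This yields the vector $(c_1,\ldots,c_n)\in V^{(1)}$. By \eqref{eq:Cartier-degree-n-minus-one}, $c_j=\ell_j(\pr_0\Phi_0(F))^{1/p}=\chi_{0,j}(F)$ over $\Fp$. The only care needed is matching Ma's sign and Frobenius-twist conventions for the contraction, so I will check the identification on the test shears $v_{0,j}(1)$ using \cref{prop:delta-evaluation}.

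For (iv), I will argue by contradiction. Suppose $\psi:T_M\to\Fp$ is a homomorphism with $\psi|_{U_M}$ equal to the character induced by $\chi_{r,j}$. This induced character is well defined because $M>N_r$ and \cref{prop:finite-jet} apply. Since $T_M$ is a finite $p$-group, $\psi$ vanishes on the Frattini subgroup $\Phi(T_M)$. By \cite[Theorem~5.2 and Corollary~5.5]{MaCartierFrattini2026}, $\Phi(T_M)$ is the common kernel of the quadratic coordinate and of $Q_0,Q_1,Q_2$. It therefore suffices to produce an element $g\in U_M\cap\Phi(T_M)$ with $\chi_{r,j}(g)\ne0$.

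The natural choice is the image of $v_{r,j}(1)$, so the main step is checking that it lies in this common kernel when $r\ge1$. Its degree is $N_r\ge(n-1)(p^2-1)$, which far exceeds the degrees where the quadratic, trace and divergence coordinates live. Its $Q_0$ value is $\chi_{0,j}(v_{r,j}(1))=0$ by \cref{prop:delta-evaluation} together with (ii). I expect the main obstacle to be verifying that $Q_1$ and $Q_2$ vanish on it: these are supported on low degrees, $d_t$ for small $t$, while the single monomial has degree $d_{q_r}$ with $q_r\ge 2(n-1)\ge4$. I would check this against Ma's explicit definitions of $Q_1$ and $Q_2$. Once this is verified, $\psi(v_{r,j}(1))=0$, whereas $\chi_{r,j}(v_{r,j}(1))=1$ by \cref{prop:delta-evaluation}, which is the desired contradiction.
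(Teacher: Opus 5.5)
Your proposal is correct and matches the paper's proof step for step. Parts (i) and (iii) use the same substitutions. Part (ii) uses the same Cartier--contraction matching $[\rho_j]\mapsto\omega_j\mapsto\partial_j$, with the Frobenius twist trivial over $\Fp$. Part (iv) uses the same contradiction: Ma's Theorem~5.2 gives $v_{r,j}(1)\in\Phi(T_M)$, which is incompatible with $\chi_{r,j}(v_{r,j}(1))=1$.

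The step you flag as the main obstacle is settled in the paper exactly as you anticipate. The flux of $v_{r,j}(1)$ is a single monomial class of Cartier index $q_r\ge3$. So $Q_0,Q_1,Q_2$, which read off indices $0,1,2$, vanish on it, and the quadratic coordinate vanishes because $N_r>2$.
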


\begin{proof}
For part~\textup{(i)}, direct calculation gives
\[
\begin{aligned}
d_{q_r}
&=(p-1)(n-1)+p(n-1)(p^r-1)\\
&=(n-1)(p^{r+1}-1)
=N_r.
\end{aligned}
\]

For part~\textup{(ii)}, inverse Cartier sends
\[
[\rho_j]
=
\left[
\left(\prod_{i\ne j}x_i^{p-1}\right)\omega_j
\right]
\]
to $\omega_j$. Under contraction with
$\omega=dx_1\wedge\cdots\wedge dx_n$, the form $\omega_j$ corresponds to
the constant vector $\partial_j$. These are precisely the coordinate
vectors of Ma's index-zero quotient $Q_0=V^{(1)}$; see
\cite[Theorem~5.2]{MaCartierFrattini2026}. Over $\Fp$ the Frobenius
twist does not change the scalar coordinate, so the restricted coordinates
agree.

For part~\textup{(iii)}, Ma's pure Cartier monomial representative
associated with $x^\alpha\partial_j$ has exponent $p\alpha_j$ in $x_j$ and
exponent $p\alpha_i+p-1$ in $x_i$ for $i\ne j$; see
\cite[Lemma~3.6]{MaCartierFrattini2026}. With the displayed choice of
$\alpha$ these exponents are
\[
p\alpha_j=0,
\qquad
p\alpha_i+p-1=p(p^r-1)+p-1=p^{r+1}-1.
\]
Moreover, $a^{p^r}=a$ in $\Fp$. Hence this representative is exactly
$v_{r,j}(a)$.

For part~\textup{(iv)}, we first make the descent to $U_M$ explicit.
By \cref{prop:finite-jet}, $\chi_{r,j}$ factors through the $N_r$-jet, which
is defined modulo $\mathfrak m^{N_r+1}$. If $F,F'\in U_n(\Fp)$ have the same
image in $U_M$, then their formal completions induce the same automorphism
of $\widehat R/\widehat{\mathfrak m}^{M}$. Since $M>N_r$, one has
$M\ge N_r+1$, so $F$ and $F'$ have the same $N_r$-jet. Hence
$\chi_{r,j}(F)=\chi_{r,j}(F')$, and the induced character on $U_M$ is well
defined.

Since $r\ge1$ and $n\ge3$, $p\ge5$,
\[
q_r=(n-1)(p^r-1)\ge2(5-1)=8\ge3.
\]
Ma's Cartier--Frattini coordinate theorem identifies $\Phi(T_M)$ with the
common kernel of the quadratic coordinate and the retained coordinates
$Q_0,Q_1,Q_2$ \cite[Theorem~5.2]{MaCartierFrattini2026}. By
part~\textup{(iii)}, the element $v_{r,j}(1)$ is a pure index-$q_r$
representative. Its first nonidentity coordinate term has physical degree
$d_{q_r}=N_r<M$, so its quadratic coordinate is zero; its Cartier flux has
no component below index $q_r$, and therefore all retained coordinates
$Q_0,Q_1,Q_2$ vanish because $q_r\ge3$. Thus
\[
v_{r,j}(1)\in\Phi(T_M).
\]
Every homomorphism from the finite $p$-group $T_M$ to $(\Fp,+)$ kills
$\Phi(T_M)$. On the other hand, \cref{prop:delta-evaluation} gives
\[
\chi_{r,j}(v_{r,j}(1))=1.
\]
Consequently no homomorphism $T_M\to(\Fp,+)$ can restrict to the character
induced by $\chi_{r,j}$ on $U_M$.
\end{proof}

The proposition identifies a genuine subgroup effect: the same higher
Cartier source that is killed in the ambient formal Frattini quotient
supports a nonzero ordinary character after restriction to the polynomial
tame subgroup.

\begin{proof}[Proof of \cref{thm:main}]
By \cref{thm:characters}, every image family has finite support.

The delta evaluation supplies a preimage of every standard coordinate vector in the target of $\chi_{\mathrm{all}}$. Given a finitely supported target family, multiply the corresponding test elements in any fixed order. Elements associated with different target coordinates need not commute, but $\chi_{\mathrm{all}}$ is a homomorphism into an abelian group. Its value on the product is therefore the sum of the prescribed standard vectors. Hence $\chi_{\mathrm{all}}$ is surjective.

Fix $j$. For a finitely supported family $(a_s)_{s\in S_{n,p}}$, define
\begin{equation}
\sigma_j((a_s)_s)=\prod_s v_{s,j}(a_s).
\end{equation}
The product is finite. All factors commute because they add polynomials independent of $x_j$ to the same coordinate. Moreover,
\[
v_{s,j}(a)v_{s,j}(b)=v_{s,j}(a+b),
\]
since
\[
(a+b)^{p^s}=a^{p^s}+b^{p^s}.
\]
Thus $\sigma_j$ is a homomorphism. By \cref{prop:delta-evaluation},
\[
\chi_{S,j}\circ\sigma_j=\id.
\]

Every homomorphism from $U_n(k)$ to an abelian group of exponent $p$ kills commutators and $p$th powers. Hence $\chi_{\mathrm{all}}$ factors through $Q_n(k)$, giving a surjection
\[
Q_n(k)\twoheadrightarrow
\bigoplus_{s\in S_{n,p}}(k^n,+).
\]
The target has countably infinite $\Fp$-dimension, so
\[
\dim_{\Fp}Q_n(k)\ge\aleph_0.
\]

Since $k$ is finite, the polynomial ring $R$, the group $U_n(k)$, and the quotient $Q_n(k)$ are countable sets. A vector space over a finite field with uncountable dimension is uncountable. Therefore
\[
\dim_{\Fp}Q_n(k)\le\aleph_0,
\]
and equality follows.
\end{proof}

\begin{remark}[A fixed-coordinate section does not exhaust the quotient]
\label{rem:fixed-section-proper}
Fix $j$, and let $V_j\subseteq Q_n(k)$ be the image of the composite of $\sigma_j$ with the quotient map $U_n(k)\to Q_n(k)$. Choose $i\ne j$ and $s\in S_{n,p}$. Every $\chi_{r,i}$ vanishes on $V_j$ by \cref{prop:delta-evaluation}, whereas
\[
\chi_{s,i}(v_{s,i}(1))=1.
\]
Thus the class of $v_{s,i}(1)$ does not lie in $V_j$, and $V_j$ is a proper $\Fp$-subspace of $Q_n(k)$.
\end{remark}

\begin{proof}[Proof of \cref{cor:intro-classification}]
For $n=1$, \cref{prop:dimension-one} shows that
\[
\operatorname{TA}_1(k)=\operatorname{Aff}_1(k),
\]
which is finite.

Let $n\ge2$. By \cref{thm:main}, the group $U_n(k)$ has a quotient which is not finitely generated. Hence $U_n(k)$ is not finitely generated. If $\operatorname{TA}_n(k)$ were finitely generated, then \cref{prop:finite-index,lem:Schreier} would imply that its finite-index subgroup $U_n(k)$ was finitely generated, a contradiction.
\end{proof}

\begin{corollary}[The modified finite-field Derksen subgroup is proper]
Let $k=\Fq$ and $n\ge3$. Then
\[
\DAMW_n(k)\subsetneq\operatorname{TA}_n(k).
\]
Nevertheless, for every $m\ge1$, the two groups have the same permutation image on $\mathbf F_{q^m}^n$.
\end{corollary}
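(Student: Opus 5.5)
The plan has two parts: properness, which follows from finite generation, and equality of permutation images, which is quoted from the literature.

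\emph{Properness.} By definition, $\DAMW_n(k)$ is generated by $\operatorname{Aff}_n(k)$ together with one further element. Since $k$ is finite, $\operatorname{Aff}_n(k)$ is a finite group. Hence $\DAMW_n(k)$ is generated by a finite set. Now suppose we had $\DAMW_n(k)=\operatorname{TA}_n(k)$. Then $\operatorname{TA}_n(k)$ would be finitely generated with $n\ge3$, and this contradicts \cref{cor:intro-classification}. The inclusion $\DAMW_n(k)\subseteq\operatorname{TA}_n(k)$ itself is clear, because both generating pieces are tame: the affine maps are tame, and the extra generator is the elementary automorphism $E_1(x_2^{p-1}\cdots x_n^{p-1})$. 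So the inclusion is proper.

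For a sharper, explicit obstruction, we could also show directly that the generators of $\DAMW_n(k)$ have bounded iterated Cartier characters. Consider a word in these generators and conjugate it into $U_n(k)$. Its characters $\chi_{r,j}$ with large $r$ would need controlled values. Tracking this through conjugation by affine maps is messy, however, and the finite-generation argument already suffices. I therefore would not pursue this route.

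\emph{Permutation images.} The second sentence is exactly \cite[Theorem~3.1]{MaubachWillems2011}, already quoted in the introduction: for every $m\ge1$, the images of $\DAMW_n(k)$ and $\operatorname{TA}_n(k)$ in $\operatorname{Sym}(\mathbf F_{q^m}^n)$ coincide. I would simply cite it.

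\emph{Main obstacle.} There is no real obstacle. The only point to check is that $\DAMW_n(k)$ is finitely generated, which holds because $\operatorname{Aff}_n(k)$ is finite. The substantive input is the classification in \cref{cor:intro-classification}.
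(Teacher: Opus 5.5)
Your proposal is correct and matches the paper's proof: $\DAMW_n(k)$ is finitely generated because $\operatorname{Aff}_n(k)$ is finite and only one further generator is added, so it cannot equal $\operatorname{TA}_n(k)$, which is not finitely generated by \cref{cor:intro-classification}. The statement about permutation images is then simply \cite[Theorem~3.1]{MaubachWillems2011}, exactly as you cite it.
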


\begin{proof}
The group $\DAMW_n(k)$ is generated by the finite group $\operatorname{Aff}_n(k)$ and one additional automorphism, so it is finitely generated. It cannot equal the non-finitely-generated group $\operatorname{TA}_n(k)$. Equality of the finite-extension permutation images is \cite[Theorem~3.1]{MaubachWillems2011}.
\end{proof}

\section{Scope, limitations, and open problems}
\label{sec:open}

The character map in \cref{thm:main} gives a quotient of the mod-$p$ abelianization; it does not determine its kernel or the ordinary abelianization. Moreover, \cref{rem:fixed-section-proper} shows that no one fixed-coordinate section exhausts $Q_n(k)$.

\begin{problem}[Kernel, splitting, and abelianization]
Determine the following.
\begin{enumerate}[label=\textup{(\roman*)}]
\item The kernel of
\[
\chi_{\mathrm{all}}:U_n(k)\longrightarrow
\bigoplus_{s\in S_{n,p}}(k^n,+).
\]
In particular, decide whether
\[
\ker\chi_{\mathrm{all}}
=
[U_n(k),U_n(k)]\,U_n(k)^{[p]}.
\]
Equivalently, is the induced surjection
\[
Q_n(k)\longrightarrow
\bigoplus_{s\in S_{n,p}}(k^n,+)
\]
an isomorphism?

\item Whether $\chi_{\mathrm{all}}$ admits a group-theoretic section. Theorem~\ref{thm:main} supplies sections after fixing one target coordinate, but the explicit lifts belonging to different coordinates need not commute.

\item The ordinary abelianization
\[
U_n(k)^{\mathrm{ab}}
=U_n(k)/[U_n(k),U_n(k)]
\]
and the remaining structure of $Q_n(k)$ if the map in part~\textup{(i)} is not an isomorphism.
\end{enumerate}
\end{problem}

The tame hypothesis enters through \cref{thm:tame-admissibility}: admissibility was verified on special affine and elementary generators and propagated through tame words. \Cref{ex:Cartier-not-iterable} shows that Cartier-admissibility is not automatic for an arbitrary de Rham class.

\begin{problem}[Admissibility beyond the tame group]
Let $F\in\operatorname{GA}_n(k)$ satisfy $\det JF=1$. Under what conditions is the flux class
\[
[F^*\lambda-\lambda]
\]
Cartier-admissible? Is every special polynomial automorphism Cartier-admissible?
\end{problem}

The proof is formulated over finite fields. Perfectness is used in the Cartier coefficient formulas and the polynomial de Rham decomposition; tameness is used in \cref{thm:tame-admissibility}; and finiteness is used in \cref{prop:finite-index}, to make the truncated jet images finite, and in the countability argument proving the exact dimension and the finite-generation classification. Over an infinite field the tangent subgroup need not have finite index in the full tame group. Extending the character construction to other perfect fields requires a separate formulation with the appropriate Frobenius twists and is not asserted here.

The family $\chi_{r,j}$ extracts one Frobenius level at a time. It is natural to ask whether these characters arise from a single intrinsic cohomological object.

\begin{problem}[Intrinsic packaging]
Can the character tower be realized by a natural de Rham--Witt, crystalline, or related object? Can such a refinement detect higher $p$-power information in the abelianization of $U_n(k)$?
\end{problem}

\subsection*{Conclusion}

We have constructed a positive-depth tower of ordinary Cartier characters on
the finite-index tangent subgroup of the tame polynomial automorphism
group. The underlying primitive-form flux, its zeroth Cartier coordinate,
and the inverse-Cartier monomial representative pattern have counterparts
in Ma's recent formal-group work. The new phenomenon is their repeated
tame-subgroup descent and survival at arbitrarily deep positive levels,
together with the cross-level delta family. The joint character map is
surjective onto a countable direct sum, each fixed-coordinate quotient is
split, and the maximal elementary abelian $p$-quotient has countably
infinite dimension. This proves the finite-generation classification.
The kernel of the joint character map, a possible all-coordinate splitting,
the exact mod-$p$ abelianization, the ordinary abelianization, and
admissibility beyond the tame subgroup remain open.

\section*{Declaration of generative AI and AI-assisted technologies}
During preparation of this work, the authors used ChatGPT for exploratory mathematical brainstorming, manuscript organization and language editing, LaTeX assistance, and literature-search support. The authors reviewed, corrected, and independently verified all AI-assisted material and take full responsibility for the content of the article.

\bibliographystyle{amsplain}
\bibliography{Paper_D_final_submission}

\end{document}